\def\eqref#1{equation~\ref{#1}}
\def\1{\bm{1}}
\def\va{{\bm{a}}}
\def\vb{{\bm{b}}}
\def\vl{{\bm{l}}}
\def\vs{{\bm{s}}}
\def\vt{{\bm{t}}}
\def\vv{{\bm{v}}}
\def\vx{{\bm{x}}}
\DeclareMathAlphabet{\mathsfit}{\encodingdefault}{\sfdefault}{m}{sl}
\SetMathAlphabet{\mathsfit}{bold}{\encodingdefault}{\sfdefault}{bx}{n}
\DeclareMathOperator*{\argmax}{arg\,max}
\DeclareMathOperator*{\argmin}{arg\,min}
\title{ProGO: Probabilistic Global Optimizer}
\author{Xinyu Zhang \& Sujit Ghosh \\ 
Department of Statistics\\
NC State University\\
Raleigh, NC 27695, USA \\
\texttt{\{xinyu$\_$zhang,sujit.ghosh\}@ncsu.edu} \\
}
\newcommand*\dd{\mathop{}\!\mathrm{d}}
\newtheorem{theorem}{Theorem}
\newtheorem{assumption}{Assumption}
\newtheorem{definition}{Definition}
\newtheorem{remark}{Remark}
\begin{document}

\maketitle

\begin{abstract}
In the field of global optimization, many existing algorithms face challenges posed by non-convex target functions and high computational complexity or unavailability of gradient information. These limitations, exacerbated by sensitivity to initial conditions, often lead to suboptimal solutions or failed convergence. This is true even for Metaheuristic algorithms designed to amalgamate different optimization techniques to improve their efficiency and robustness. To address these challenges, we develop a sequence of multidimensional integration-based methods that we show to converge to the global optima under some mild regularity conditions. Our probabilistic approach does not require the use of gradients and is underpinned by a mathematically rigorous convergence framework anchored in the nuanced properties of nascent optima distribution. In order to alleviate the problem of multidimensional integration, we develop a latent slice sampler that enjoys a geometric rate of convergence in generating samples from the nascent optima distribution, which is used to approximate the global optima. The proposed Probabilistic Global Optimizer (ProGO) provides a scalable unified framework to approximate the global optima of any continuous function defined on a domain of arbitrary dimension. Empirical illustrations of ProGO across a variety of popular non-convex test functions (having finite global optima) reveal that the proposed algorithm outperforms, by order of magnitude, many existing state-of-the-art methods, including gradient-based, zeroth-order gradient-free, and some Bayesian Optimization methods, in term regret value and speed of convergence. It is, however, to be noted that our approach may not be suitable for functions that are expensive to compute.

\end{abstract}

\section{Introcution} \label{sec:intro}
Global optimization constitutes a critical research area within applied mathematics and numerical analysis, aiming to locate the global optima of target functions over a specified domain. This field has substantial applications across various sectors in machine learning, such as hyperparameter tuning \citep{snoek2012practical}, signal processing \citep{liu2020primer}, and black-box adversarial attacks \citep{ru2019bayesopt}. A global optimization problem (minimization) with unique minima can be formulated as
\begin{equation} \label{eq:prob1}
   \vx^* = \argmin_{\vx \in \Omega} f(\vx), 
\end{equation}
where $f(\vx)$ is generally a continuous function defined over a domain $\Omega$ $(\subseteq\mathbb{R}^d)$, that is a subset of the $d$-dimensional Euclidean space $\mathbb{ R}^d$. Extensive progress has been made in optimizing globally convex functions over compact domains, where the global minima $\vx^*$ is guaranteed to be identified. However, less generalizable solutions exist for non-convex functions or on non-compact sets, even when some target function possesses smoothness or differentiability.

For semantic precision, we differentiate between ``optimum" / ``minimum", the optimal / lowest function value $f^*$, and ``optima" / ``minima", which corresponds to the $\vx^*$ at which $f^*$ is attained. In this paper, we assume the existence of a finite $f^*=\min_{\vx\in\Omega} f(\vx)$ and a non-empty set of minima $\Omega^*=\{\vx\in\Omega: f(\vx)=f^*\}$. When the minima $\vx^*$ in eq. (\ref{eq:prob1}) is unique, $\Omega^*$ will be a singleton set. Importantly, $\Omega^*$ can comprise either a finite or infinite number of elements. One of our main contributions lies in the identification of the limitations of gradient-based techniques, particularly for non-convex functions, and the introduction of a reliable, integration-based alternative that guarantees to locate the global optima without convex assumptions.  

Additionally, the efficacy of many global optimization algorithms is sensitive to initial points. Even metaheuristic algorithms, which amalgamate various optimization techniques for robustness, can yield suboptimal outcomes with poorly chosen initial conditions. Our method, under mild regularity conditions, is robust to initial conditions and yields accurate estimates of \(x^*\) within a decent computational timeframe, provided that function evaluations are not expensive. 


\paragraph{Gradient-based algorithms.} Gradient-based methods like stochastic gradient descent \citep{robbins1951stochastic}, Adam \citep{KingBa15}, AdaGrad \citep{duchi2011adaptive}, and RMSprop \citep{tieleman2012rmsprop} have found broad applications across disciplines. They have demonstrated their utility in various successful applications such as generative adversarial networks \citep{seward2018first} and reinforcement learning \citep{mnih2016asynchronous}. While these methods offer practical effective solutions, their theoretical convergence to global optima is often framed within specific contexts, particularly when the target function $f(\vx)$ is smooth and convex. Recent variants like AMSGrad \citep{reddi2019convergence} and parameter-selective approaches \citep{shi2020rmsprop} explore parameter effects on theoretical guarantee and practical efficacy.

\paragraph{Zeroth order (ZO) methods.} In cases where gradient information is unavailable, noisy, or computationally expensive to evaluate, such as signal processing and machine learning \citep{liu2020primer}, ZO methods have emerged driven by the need to solve these problems. These techniques, also known as ``black-box" or ``derivative-free" optimization, bypass the need for gradients and focus solely on function values at any given point \citep{larson2019derivative, rios2013derivative}. Several noteworthy methods have been proposed in this vein, including Gradientless Descent (GLD) \citep{golovin2019gradientless} which is numerically stable via a geometric approach, Random Gradient-Free (RGF) method via finite difference along a random direction by \citet{nesterov2017random}, and Prior-Guided Random Gradient-Free (PRGF) by \citet{cheng2021convergence}, typically operating under a framework that assumes convexity in the target functions. Additionally, \citet{shu2022zeroth} introduced the Zeroth-Order Optimization with the Trajectory-Informed Derivative Estimation (ZoRD) algorithm, further enriching the query-efficient ZO optimization methods landscape.

\paragraph{Global optimization challenges.} While gradient-based methods and ZO methods offer advantages, each comes with its set of limitations and is often contingent upon wise selections of initial parameters and starting points. Besides, Monte Carlo-based methods provide consistent global convergence \citep{harrison2010introduction} but can be computationally demanding in high-dimensional spaces. Bayesian optimization techniques such as the Gaussian Process-Upper Confidence Bound (GP-UCB) algorithm \citep{srinivas2009gaussian} and Trust Region Bayesian Optimization (TuRBO) \citep{eriksson2019scalable} assume $f(\vx)$ follows the Gaussian Process, whose performances hinge upon careful selection of acquisition and kernel functions. Analytical methods do contribute to domain-specific solutions but can entail intricate numerical challenges, affecting their widespread applicability \citep{corriou2021analytical}. Most literature in this field has been oriented towards establishing first-order optimality conditions, often under function convexity and differentiability assumptions. A notable work by \citet{luo2018minima} formalized a rigorous mathematical relation between an arbitrary continuous function $f$ defined over a compact set $\Omega \subseteq \mathbb{R}^d$ and its corresponding global minima $f^*$; however, this work only built a theoretical framework. This underscores the pressing need for an efficient and robust global optimization framework, especially in addressing non-convex and high-dimensional challenges.

\begin{figure}[t] 
    \centering
    \begin{minipage}[t]{.28\linewidth}
        \centering
        \subcaptionbox{Surface of Ackley ($d=2$) }{ 
        \includegraphics[width=\textwidth, trim={0 -170 0 0}, clip]{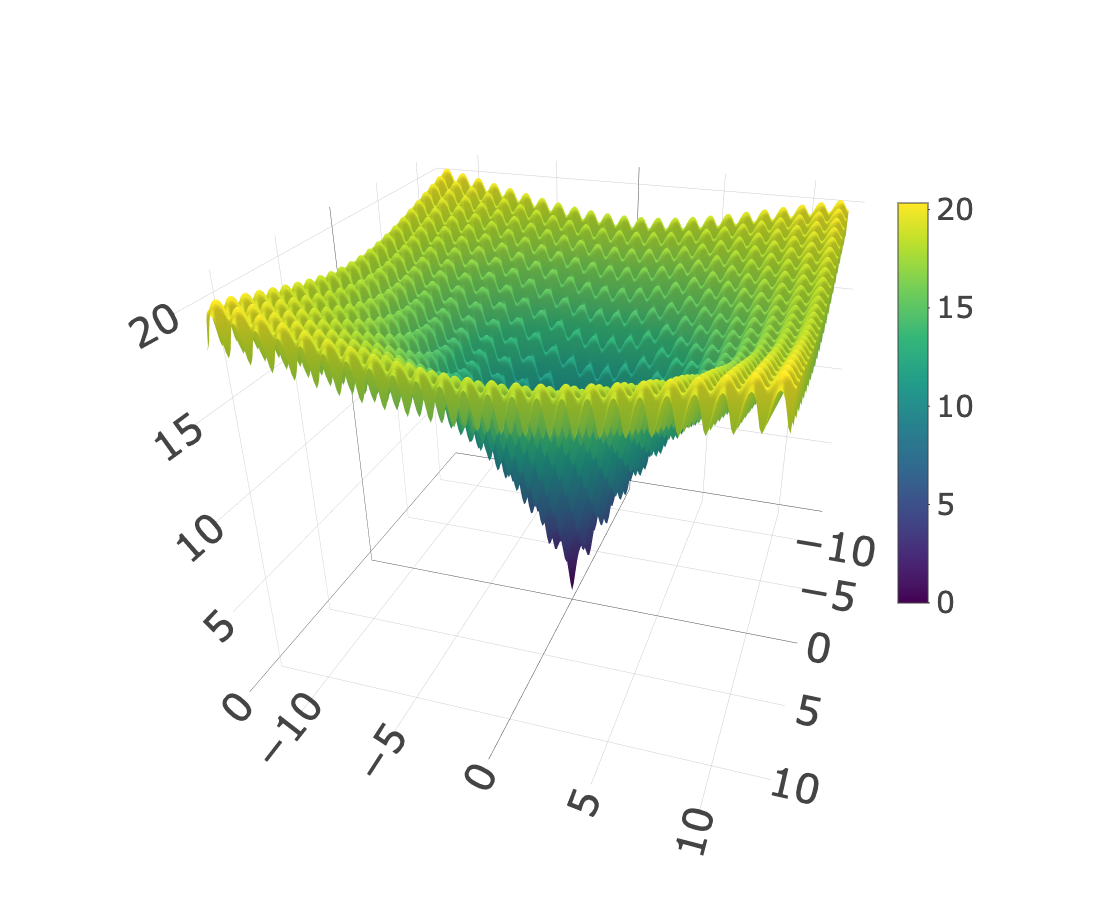}
        }
    \end{minipage}
    \begin{minipage}[t]{.7\linewidth}
        \centering
        \subcaptionbox{Results on Ackley ($d=1000$)}{ 
        \includegraphics[width=\textwidth]{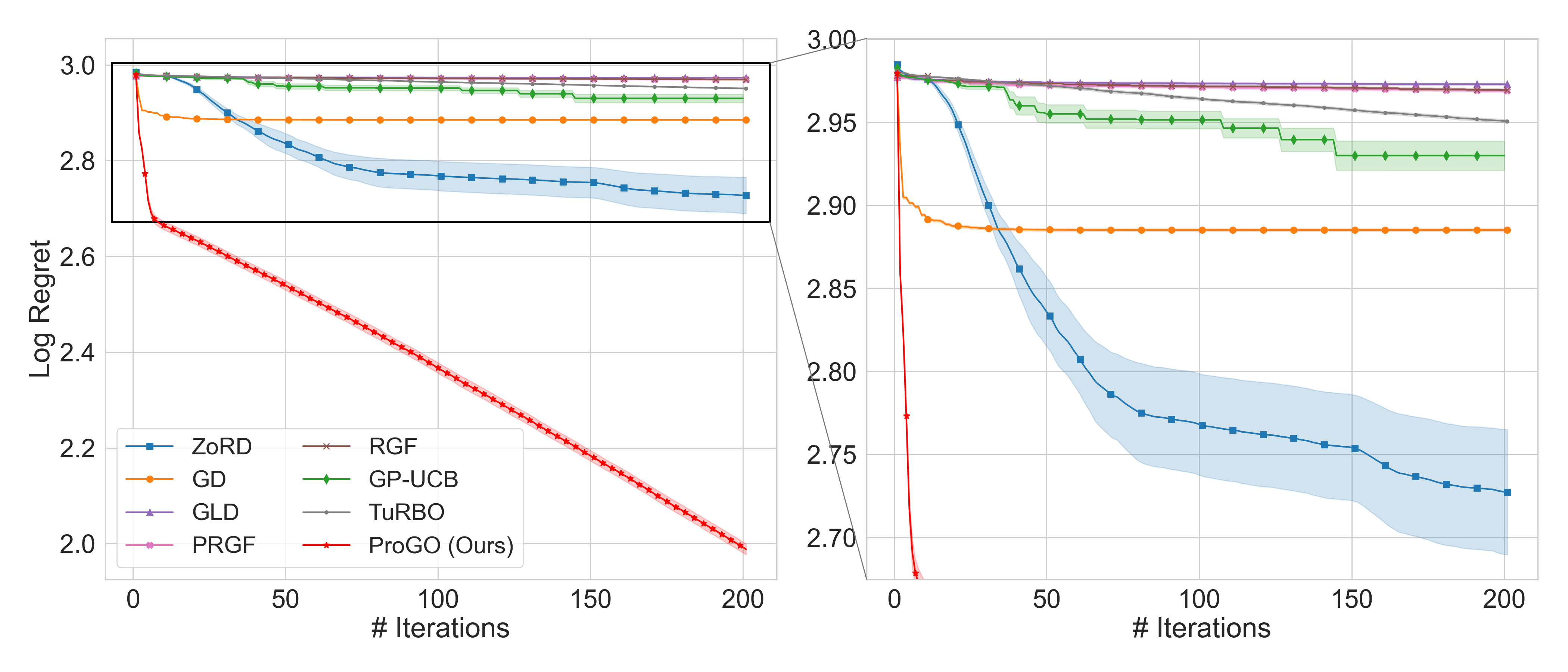}
        }
    \end{minipage}
    \caption{(a) Visualization of the Ackley function for $d=2$. (b) Evaluation of ProGO and competing methods (elaborated upon in Section \ref{sec:exp}) applied to the Ackley function in a high-dimensional setting with $d=1000$. The $x$-axis represents the iteration count, while the $y$-axis denotes the average log-scaled regret. Each curve shows the mean $\pm$ standard error across ten independent runs. ProGO outperforms other methods by a notably faster rate of convergence accompanied by smaller variability. \label{fig:ackley1000d}}
\end{figure}

\paragraph{Main contribution.} This paper introduces the Probabilistic Global Optimizer (ProGO), a novel non-gradient-based global optimization algorithm based on a sequence of sampling from a suitable probability distribution. Our work significantly extends the theoretical framework laid by \citet{luo2018minima}, notably in three key dimensions:

\begin{enumerate}
    \item \textbf{Generalization to Non-Compact Set:} \citet{luo2018minima}'s work is based on the assumption that $\Omega$ is a compact set. Such an assumption may limit the scope of its applicability to a class of popular functions when $\Omega=\mathbb{R}^d$. E.g., even when $d=1$, the elementary function $f(\vx)=\vx^2$ defined over $\mathbb{R}$ is unbounded. We generalize the domain to an arbitrary subset of $\mathbb{R}^d$ by incorporating a sequence of probability distribution on $\mathbb{R}^d$ that assigns less and less mass outside $\Omega^*$ and eventually converging with support $\Omega^*$.
    \item \textbf{A Practically Efficient Algorithm:} While a theoretical framework is established in \citet{luo2018minima} with several interesting results when the domain $\Omega$ is compact and the function $f$ is assumed continuous or smooth (with second order derivatives), to the best of our knowledge, no practical methods are yet available to evaluate the (potentially high-dimensional) integration required to estimate the minima. We fill this gap by developing a practically efficient algorithm that we call ProGO, which uses a latent slice sampler (explained later) to efficiently obtain samples from the probability distribution of the minima.
    \item \textbf{Extensive Experiments Validation:} We carried out a comprehensive series of experiments to evaluate ProGO's performance in comparison to various types of leading global optimization techniques, including Gradient Descent (GD), Zeroth-Order Optimization (ZO), and Bayesian Optimization (BO). Our empirical evidence demonstrates that ProGO consistently surpasses all the algorithms we compared against across various metrics -- most notably, geometric rate of convergence to global minima and computational efficiency as indicated by function evaluations or CPU time. Specifically, we illustrate the superior numerical performance of our proposed ProGO for the popular Ackley function (known to have several local optima with a global minimum at the origin) for dimensions ranging from $d=20$ to $d=1000$ (refer to Fig. \ref{fig:ackley1000d}), as well as the Levy function (refer to Fig. \ref{fig:levy1000d}). As depicted in the figures, the logarithmic regret exhibits a linear rate of convergence, which in the original scale translates to geometric convergence, outpacing the majority of extant global optimization algorithms.
\end{enumerate}

The rest of this paper unfolds as follows: Section \ref{sec:prelim} lays the theoretical groundwork on the probabilistic minima distribution for our approach; Section \ref{sec:progo} details the latent slice sampler and our ProGO algorithm; Section \ref{sec:exp} presents the empirical validations, and Section \ref{sec:conc} concludes the paper.

\section{Preliminaries} \label{sec:prelim}

\subsection{Nascent Minima Probability Distribution}

Our proposed algorithm, ProGO, is based on generalizing the sequence of nascent minima (probability) distributions defined in \citet{luo2018minima} using an arbitrary (prior) probability measure with full support on the Euclidean space $\mathbb{R}^d$. This distribution possesses advantageous properties that will be elaborated upon in subsequent discussions. In particular, we will show how to efficiently generate samples from the sequence of such distributions and subsequently use the empirical (posterior) mean and other summaries to estimate the minimum value $f^*$ and the minima $\vx^*$.

\begin{assumption} \label{def:1}
The following conditions are assumed throughout the paper:
\begin{enumerate}
    \item[(i)] Assume that the function $f:\Omega\subseteq\mathbb{R}^d \rightarrow \mathbb{R}$ is a continuous function with a finite global minimum value $f^*$; i.e., $f(\vx)\geqslant f^*$ for all $\vx\in\Omega$.
    \item[(ii)] The set of global minima $\Omega^* =\{\vx\in \Omega: f(\vx)=f^*\}$ is non-empty.
    \item[(iii)] There is a probability measure with density $\pi(\vx)$ that has full suppprt on $\mathbb{R}^d$. In other words, $\pi(\vx)>0$ for any $\vx\in\mathbb{R}^d$ and $\int_{\mathbb{R}^d} \pi(\vx) d\vx = 1$. Here, the integration is with respect to the Lebesgue measure on $\mathbb{R}^d$.
\end{enumerate}
\end{assumption}

In the above, the probability density $\pi(\vx)$ can be chosen arbitrarily, but in practice, we can use a uniform distribution when $\Omega$ is compact, or a very flat (nearly uniform) distribution when $\Omega$ is unbounded. Regardless, we next define a nascent minima distribution when it depends on the choice of the density $\pi(\cdot)$.

\begin{definition} \label{def:minima}
Nascent Minima (probability) distribution:

For any $k\geqslant 0$, a nascent minima distribution density is defined as:
\begin{align} \label{def:nmd}
    m_k(\vx) = \frac{e^{-kf(\vx)} \cdot \pi(\vx) }{\int_{\Omega} e^{-kf(\vt) } \cdot \pi(\vt) \dd \vt}.
\end{align}
\end{definition}

\begin{remark} \label{rmk:1} Note that the denominator in eq. (\ref{def:nmd}) is a finite positive quantity for any arbitrary $k\geqslant 0$, because $0<\int_{\Omega} e^{-kf(\vt) } \cdot \pi(t) \dd t\leqslant e^{-k f^*}$. The assumption that $\pi(\cdot)$ is a probability density can be relaxed as long as $\int_{\Omega} e^{-kf(\vt) } \cdot \pi(\vt) \dd \vt<\infty$ for any $k>0$, even when $\int_{\Omega} \pi(t) \dd t =\infty$.
\end{remark}

Notice that $m_k(\vx)$ can also written as
\begin{align}  
    m_k(\vx) & 
     = \frac{e^{-k (f(\vx)-f^*) } \cdot \pi(\vx) }{\int_{\Omega} e^{-k (f(\vt)-f^*) } \cdot \pi(\vt) \dd \vt }.
\end{align}
By replacing the original $f(\vx)$ by $f(\vx) - f^*$, we may assume without loss of generality that $f$ is a non-negative valued function and has a global minimum $f^{*}= 0$. Consequently, we focus on finding a solution in the set of global minima $\Omega^* = \{\vx \in \Omega: f(\vx) = 0\} \neq \emptyset$ for the rest of this paper for all subsequent theoretical analyses. 

However, it should be noted that for practical applications, we need to work with the original function $f$ as we will not know that global minimum $f^*$ and set of minima $\Omega^*$, and our goal would be to approximate $f^*$ and $\vx^*\in\Omega^*$ by letting $k\rightarrow\infty$.

\begin{remark} \label{rmk:2}
If the original $f$ is a positive valued function with $f^*>0$, we can replace it with $\log f$ when defining the nascent minima density in eq. (\ref{def:nmd}). Also, if a global maximum is desired, we can replace the original $f$ by $-f$ in defining the nascent minima density in eq. (\ref{def:nmd}).
\end{remark}

Next, we provide two results under very minimal conditions, which establish the convergence of the (generalized) moments of the nascent minima distribution to the minimum value $f^*$. With additional conditions, we also establish the convergence of the minima.


\subsection{Convergence Properties}
In this subsection, we establish the theoretical underpinnings regarding the convergence properties of the nascent minima distribution.

\begin{theorem}[] \label{thm:1}
Consider a function $f$ and a probability density $\pi$ satisfying the assumptions (i)-(iii) given in Assumption \ref{def:1}. Then, the nascent minima distribution has the following properties:
\begin{equation}
   \lim _{k \rightarrow \infty} \int_{\Omega} f(\vx) m_k(\vx) \mathrm{d} \vx=f^*  
   \label{eq:conv}
\end{equation}
\end{theorem}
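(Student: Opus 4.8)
The plan is to first invoke the normalization recorded after Definition~\ref{def:minima}: replacing $f$ by $f-f^*$ leaves $m_k$ unchanged, so without loss of generality I may assume $f\geqslant 0$, $f^*=0$, and $\Omega^*=\{\vx\in\Omega: f(\vx)=0\}\neq\emptyset$. Writing the quantity to be controlled as $I_k/Z_k$, where $I_k=\int_\Omega f(\vx)\,e^{-kf(\vx)}\pi(\vx)\,\dd\vx$ and $Z_k=\int_\Omega e^{-kf(\vx)}\pi(\vx)\,\dd\vx$, the goal reduces to showing $I_k/Z_k\to 0$. Since $f\geqslant 0$ forces $I_k/Z_k\geqslant 0$, I only need an upper bound of the right order.

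Next I would fix $\epsilon>0$ and split $\Omega$ into the sublevel set $A_\epsilon=\{\vx:f(\vx)\leqslant\epsilon\}$ and its complement $B_\epsilon=\{\vx:f(\vx)>\epsilon\}$. On $A_\epsilon$ the bound $f(\vx)e^{-kf(\vx)}\leqslant\epsilon\,e^{-kf(\vx)}$ gives $\int_{A_\epsilon}f\,e^{-kf}\pi\,\dd\vx\leqslant\epsilon Z_k$. On $B_\epsilon$ I would use the elementary fact that $t\mapsto t\,e^{-kt}$ is decreasing on $[1/k,\infty)$, so once $k>1/\epsilon$ every $\vx\in B_\epsilon$ satisfies $f(\vx)e^{-kf(\vx)}\leqslant\epsilon\,e^{-k\epsilon}$; since $\int_{\mathbb{R}^d}\pi=1$ this yields $\int_{B_\epsilon}f\,e^{-kf}\pi\,\dd\vx\leqslant\epsilon\,e^{-k\epsilon}$. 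Combining the two pieces, $I_k/Z_k\leqslant\epsilon+\epsilon\,e^{-k\epsilon}/Z_k$ for all $k>1/\epsilon$.

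The crux is a matching lower bound on $Z_k$ that decays strictly more slowly than $e^{-k\epsilon}$. Choosing $\delta\in(0,\epsilon)$ and a minimizer $\vx_0\in\Omega^*$, continuity of $f$ makes $\{\vx\in\Omega:f(\vx)<\delta\}$ a nonempty relatively open set on which $e^{-kf(\vx)}\geqslant e^{-k\delta}$; because $\pi>0$ everywhere, its mass $c_\delta:=\int_{\{f<\delta\}}\pi\,\dd\vx$ is strictly positive, so $Z_k\geqslant c_\delta\,e^{-k\delta}$. Substituting, $\epsilon\,e^{-k\epsilon}/Z_k\leqslant(\epsilon/c_\delta)\,e^{-k(\epsilon-\delta)}\to 0$. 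Hence $\limsup_{k\to\infty} I_k/Z_k\leqslant\epsilon$, and letting $\epsilon\downarrow 0$ together with $I_k/Z_k\geqslant 0$ delivers $\lim_{k\to\infty} I_k/Z_k=0=f^*$.

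The one step deserving care, which I expect to be the main obstacle, is the positivity of $c_\delta$: it requires the sublevel set $\{f<\delta\}$ to carry positive Lebesgue (equivalently $\pi$-) mass for every $\delta>0$. This is automatic when some minimizer lies in the closure of the interior of $\Omega$ (e.g.\ $\Omega$ a box or $\Omega=\mathbb{R}^d$), but could fail if $\Omega^*$ sits entirely on a measure-zero portion of $\Omega$. I would therefore state this mild regularity explicitly, or fold it into the standing assumptions, so that the Laplace-type estimate goes through. Everything else is a uniform pointwise bound combined with the two-sided control $c_\delta\,e^{-k\delta}\leqslant Z_k$ and $I_k\leqslant\epsilon Z_k+\epsilon\,e^{-k\epsilon}$, which fit together without further subtlety.
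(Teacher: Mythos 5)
Your proof is correct and, at its core, follows the same strategy as the paper's: reduce to $f^*=0$, split $\Omega$ into a sublevel set near the minimum and its complement, bound the near piece by (a multiple of) $\epsilon$ times the normalizer, and control the far piece via a Laplace-type lower bound on the normalizer $Z_k$ coming from a smaller sublevel set. The substantive difference is how the far piece is killed. The paper forms $g_k(\vx)=f(\vx)\pi(\vx)e^{-k(f(\vx)-\epsilon/4)}\big/\int_{\Omega_{\epsilon/4}}\pi(\vt)\,\dd\vt$ on $\Omega_{\epsilon/2}^c$ and invokes the monotone convergence theorem to get $\int g_k\to 0$, whereas you use the elementary fact that $t\mapsto t e^{-kt}$ is decreasing on $[1/k,\infty)$ to obtain the uniform bound $f(\vx)e^{-kf(\vx)}\leqslant \epsilon e^{-k\epsilon}$ on $\{f>\epsilon\}$ once $k>1/\epsilon$, and then compare against $Z_k\geqslant c_\delta e^{-k\delta}$ with $\delta<\epsilon$. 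Your route is more elementary and more quantitative: it avoids any convergence theorem and yields an explicit rate $(\epsilon/c_\delta)e^{-k(\epsilon-\delta)}$ for the tail term, at the mild cost of introducing the auxiliary level $\delta$.

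Your closing caveat about the positivity of $c_\delta$ is well taken, and it is worth noting that it applies equally to the paper's own proof: the paper divides by $\int_{\Omega_{\epsilon/4}}\pi(\vt)\,\dd\vt$ without verifying that this quantity is strictly positive, which is exactly the same requirement. Positivity is automatic when $\Omega=\mathbb{R}^d$, or more generally when some minimizer is in the closure of the interior of $\Omega$, but it can genuinely fail under Assumption 1 alone: for $\Omega=\{0\}\cup[1,2]\subset\mathbb{R}$ with $f(x)=x$, the sublevel set $\{f<\delta\}=\{0\}$ has zero Lebesgue (hence zero $\pi$) mass for small $\delta$, and in fact $\mu_k\to 1\neq 0=f^*$, so the conclusion of the theorem itself fails. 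Thus the regularity condition you propose to state explicitly is needed by both arguments; flagging it is a strength of your write-up rather than a gap, since the paper silently assumes it.
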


The proof is detailed in Section \ref{sec:proof1}. The above result implies that the expected value $\mathbb{E}_{m_k}[f(X)]$ converges to the minimum value $f^*$ and hence, if we are able to generate samples from the nascent minima distribution $m_k(\cdot)$ for any $k>0$, then we can approximate $f^*$ arbitrarily close by choosing a large $k>0$.  Next, we show that the above convergence is monotonic, which in turn implies that by increasing $k$ sequentially, we will get closer and closer to the minimum value $f^*$.

\begin{theorem}[monotonicity] \label{thm:2}
Consider a non-constant function $f$ and a probability density $\pi$ satisfying the assumptions (i)-(iii) given in Assumption \ref{def:1}. For each $k>0$, let $\mu_k=\mathbb{E}_{m_k}[f(X)]$ denotes the expectation of $f(X)$ when $X\sim m_k(\cdot)$. Then the sequence $\{\mu_k\}$ is monotonically decreasing and satisfies
\begin{align}
\frac{\mathrm{d}\mu_k}{\mathrm{d} k} = \frac{\mathrm{d}}{\mathrm{d} k}\int_{\Omega} f(\vx) m_k(\vx) \mathrm{d} \vx=-\mathbb{V}{ar}_k(f) < 0, 
\end{align}
where $\mathbb{V}{ar}_k(f)=\int_{\Omega}\left(f(\vx)-\mu_k\right)^2 m_k(\vx) \mathrm{d} \vx$ denotes the variance of $f(X)$ when $X\sim m_k(\cdot)$.
\end{theorem}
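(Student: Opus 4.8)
The plan is to recognize $m_k$ as a one-parameter exponential family in the natural parameter $-k$ with sufficient statistic $f$, so that $\mu_k$ and $\mathbb{V}ar_k(f)$ emerge as the first two derivatives of a log-partition function. First I would invoke the normalization already established above, so that without loss of generality $f \geqslant 0$ and $f^* = 0$, and introduce the partition function $Z(k) = \int_{\Omega} e^{-kf(\vt)}\pi(\vt)\,\dd\vt$, which is finite and strictly positive for every $k>0$ by Remark \ref{rmk:1}. In this notation $m_k(\vx) = e^{-kf(\vx)}\pi(\vx)/Z(k)$, and since $Z'(k) = -\int_{\Omega} f(\vt)e^{-kf(\vt)}\pi(\vt)\,\dd\vt$ we obtain the compact identity $\mu_k = -Z'(k)/Z(k) = -\frac{\dd}{\dd k}\log Z(k)$.

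Next I would differentiate once more. A direct computation gives $\frac{\dd\mu_k}{\dd k} = -\frac{\dd^2}{\dd k^2}\log Z(k) = -\big(Z''(k)/Z(k) - (Z'(k)/Z(k))^2\big)$. Identifying $Z''(k)/Z(k) = \E_{m_k}[f^2]$ and $(Z'(k)/Z(k))^2 = \mu_k^2$, the parenthesized term is exactly $\E_{m_k}[f^2] - (\E_{m_k}[f])^2 = \mathbb{V}ar_k(f)$, which yields the claimed identity $\frac{\dd\mu_k}{\dd k} = -\mathbb{V}ar_k(f)$.

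The main obstacle is justifying differentiation under the integral sign, i.e. that $Z$ is twice continuously differentiable with $Z'(k) = -\int_{\Omega} f e^{-kf}\pi\,\dd\vt$ and $Z''(k) = \int_{\Omega} f^2 e^{-kf}\pi\,\dd\vt$. To apply the dominated-convergence form of the Leibniz rule on a neighbourhood $[k_0-\delta,\,k_0+\delta]$ with $0<\delta<k_0$, I would bound the integrands uniformly in $k$ via the elementary inequality $t^j e^{-kt} \leqslant t^j e^{-(k_0-\delta)t} \leqslant C_j$ for $t\geqslant 0$ and $j \in \{0,1,2\}$, where $\sup_{t\geqslant 0} t^j e^{-at} = (j/(ea))^j < \infty$ for $a = k_0-\delta > 0$. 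Since $\pi$ is a probability density, each dominating function $C_j\,\pi$ is integrable over $\Omega$; this simultaneously certifies the finiteness of the moment integrals and licenses the interchange of derivative and integral, establishing the two displayed formulas for $Z'$ and $Z''$.

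Finally, to upgrade to the strict inequality I would argue that $\mathbb{V}ar_k(f) > 0$ whenever $f$ is non-constant. Since $\pi > 0$ everywhere, $m_k$ has strictly positive density on $\Omega$, so $\mathbb{V}ar_k(f) = 0$ would force $f = \mu_k$ on a dense subset of $\Omega$; continuity of $f$ would then make $f \equiv \mu_k$ identically, contradicting the non-constancy hypothesis. Hence $\mathbb{V}ar_k(f) > 0$ for every $k>0$, giving $\frac{\dd\mu_k}{\dd k} < 0$, and integrating this strictly negative derivative shows that $\{\mu_k\}$ is monotonically (indeed strictly) decreasing.
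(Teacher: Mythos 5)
Your proof is correct, but it takes a genuinely different route from the paper's. You exploit the exponential-family structure directly: writing $Z(k)=\int_\Omega e^{-kf}\pi$, you get $\mu_k=-\frac{\dd}{\dd k}\log Z(k)$ and $\frac{\dd \mu_k}{\dd k}=-\frac{\dd^2}{\dd k^2}\log Z(k)=-\mathbb{V}ar_k(f)$, with the differentiation under the integral sign licensed by dominated convergence using the bounds $\sup_{t\geqslant 0}t^je^{-at}=(j/(ea))^j$. The paper instead differentiates $\log m_k(\vx)$ pointwise to get $\frac{\dd}{\dd k}m_k(\vx)=m_k(\vx)\left(\mathbb{E}_{m_k}(f)-f(\vx)\right)$, writes $m_{k+\Delta k}-m_k$ as an integral of this derivative, exchanges the resulting double integral by Fubini (its Remark 3), applies a mean-value argument to the difference quotient, and passes to the limit $\Delta k\to 0$. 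Amusingly, the paper's Remark 3 uses exactly the same elementary bounds $te^{-kt}\leqslant 1/(ke)$ and $t^2e^{-kt}\leqslant 4/(k^2e^2)$ that you use, just in service of Fubini rather than of the Leibniz rule; your cumulant-function argument buys compactness and rides on a standard, well-understood fact, while the paper's buys a self-contained derivation at the cost of the detour through difference quotients. On the strict inequality you are actually more careful than the paper (which simply asserts $\mathbb{V}ar_k(f)=0$ iff $f$ is constant): your argument that full support of $m_k$ plus continuity of $f$ forces $f\equiv\mu_k$ is the right one, though note both you and the paper tacitly assume $\Omega$ is regular enough that a Lebesgue-null relatively open subset of $\Omega$ must be empty (true, e.g., if $\Omega$ is open or is the closure of its interior); for a truly arbitrary $\Omega\subseteq\mathbb{R}^d$ a non-constant $f$ could differ from $\mu_k$ only on a null part of $\Omega$, and the strict inequality would fail for both proofs.
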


The detailed proof is provided in Section \ref{sec:proof2}. Notice that the monotonic convergence of $\mu_k$ as $k$ increases is established for any continuous function $f$ and probability density $\pi$ satisfying (i)-(iii) of Assumption \ref{def:1}.  This allows us for a very general use of ProGO with minimal assumptions for any dimension $d\geqslant 1$. Next, we explore the convergence of the minimum values with additional assumptions.

Here, we introduce the strong separability condition to define the scenario in which this ProGo method is most suitable. 

\begin{assumption} [strong separability condition] \label{assumption2} Consider the set of minima $\Omega^*=\{\vx\in\Omega: f(\vx)=f^*\}$ which is assumed to be non empty. Then $f$ is said to satisfy a strong separability if, for any given $\delta>0$, we have $\inf_{\vx\notin\Omega^*: ||\vx-\tilde{\vx}||>\delta} f(\vx) > f^*$ for any $\tilde{\vx}\in\Omega^*$.
\end{assumption}

The above condition implies that if $\vx\notin\Omega^*$ and $||\vx-\tilde{\vx}||>\delta$ for some $\delta>0$, then $f(\vx)>f^*$. In other words, the $f$ values for $\vx$ not in $\Omega^*$ are well separated from those that are in $\Omega^*$ and hence $\epsilon_0=\inf\{f(\vx)-f^*: \vx\notin\Omega^*\}>0$.

\begin{theorem} \label{thm:3}
Consider a bounded probability density $\pi$ and a target function satisfying the assumptions (i)-(iii) in Assumption 1. For each $k>0$, let $\Omega_k = \{\vx \in \Omega: m_k(\vx) \geqslant m_k(\tilde{\vx}), \forall \tilde{\vx} \in \Omega\}$ denotes the set of maximizes for $m_k(\vx)$. Then for any sequence $\{\vx_k^*\}$, $\vx_k^* \in \Omega_k$, it satisfy

\begin{equation} \label{eq:thm31}
       \lim_{k \rightarrow \infty} f(\vx_k^*) =f^*,
\end{equation}


In addition, if the target function satisfies the strong separability condition in Assumption 2, we have 
\begin{equation} \label{eq:thm32}
       \lim_{k \rightarrow \infty} \inf_{ \tilde{\vx} \in \Omega^*} ||\vx_k^* - \tilde{\vx}|| = 0.
\end{equation}

\end{theorem}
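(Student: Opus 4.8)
The plan is to establish the two claims in sequence, proving the mode-value convergence (eq.~\ref{eq:thm31}) first and then bootstrapping it, via the strong separability condition, into the mode-location convergence (eq.~\ref{eq:thm32}). Throughout I would adopt the normalization $f^*=0$ justified earlier in the section, so that $f\geqslant 0$ and $\Omega^*=\{\vx\in\Omega:f(\vx)=0\}$, and write $g_k(\vx)=e^{-kf(\vx)}\pi(\vx)$ for the unnormalized numerator of $m_k$. Since the normalizing constant in eq.~(\ref{def:nmd}) does not depend on $\vx$, a point $\vx_k^*\in\Omega_k$ is precisely a maximizer of $g_k$, which is the form I would work with.

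For eq.~(\ref{eq:thm31}) the key idea is a two-sided squeeze on $g_k(\vx_k^*)=\sup_{\vx\in\Omega}g_k(\vx)$. For the lower bound, fix $\epsilon>0$ and use continuity of $f$ to produce a bounded set $B_\epsilon\subseteq\{\vx\in\Omega:f(\vx)<\epsilon\}$ of positive Lebesgue measure; the full support of $\pi$ then gives $\int_{B_\epsilon}\pi\,\dd\vx>0$. Because a supremum dominates an average, $g_k(\vx_k^*)\geqslant |B_\epsilon|^{-1}\int_{B_\epsilon}g_k\,\dd\vx\geqslant |B_\epsilon|^{-1}e^{-k\epsilon}\int_{B_\epsilon}\pi\,\dd\vx$. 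For the upper bound I would invoke boundedness of $\pi$, say $\pi\leqslant M$, to get $g_k(\vx_k^*)\leqslant M e^{-kf(\vx_k^*)}$. Chaining the two estimates and taking logarithms yields $f(\vx_k^*)\leqslant \epsilon + k^{-1}\log\bigl(M|B_\epsilon|/\int_{B_\epsilon}\pi\,\dd\vx\bigr)$, so $\limsup_{k\to\infty}f(\vx_k^*)\leqslant\epsilon$. Since $\epsilon>0$ is arbitrary and $f\geqslant 0$, this forces $f(\vx_k^*)\to 0=f^*$.

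For eq.~(\ref{eq:thm32}) I would argue by contradiction using the first claim. If $\inf_{\tilde\vx\in\Omega^*}\|\vx_k^*-\tilde\vx\|\not\to 0$, there exist $\delta>0$ and a subsequence with $\|\vx_{k_j}^*-\tilde\vx\|\geqslant\delta$ for every $\tilde\vx\in\Omega^*$; since $\Omega^*=f^{-1}(\{0\})$ is closed by continuity of $f$, these points lie outside $\Omega^*$. Fixing any reference $\tilde\vx_0\in\Omega^*$ and applying the strong separability condition of Assumption~\ref{assumption2} with radius $\delta/2$, I obtain $f(\vx_{k_j}^*)\geqslant \inf_{\vx\notin\Omega^*,\,\|\vx-\tilde\vx_0\|>\delta/2}f(\vx)=:\eta>0$ for all $j$. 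This contradicts $f(\vx_{k_j}^*)\to f^*=0$ established above, so eq.~(\ref{eq:thm32}) must hold.

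The delicate part is the lower bound in the first claim: I must guarantee that the near-minimal sublevel set carries enough $\pi$-mass to overcome the exponential weighting as $k\to\infty$. This is exactly where both hypotheses specific to Theorem~\ref{thm:3} enter, namely boundedness of $\pi$ (for the upper estimate on $g_k(\vx_k^*)$) and full support of $\pi$ (for $\int_{B_\epsilon}\pi\,\dd\vx>0$), and where continuity of $f$ is needed to ensure $\{f<\epsilon\}\cap\Omega$ has positive Lebesgue measure rather than collapsing onto a lower-dimensional piece of the boundary of $\Omega$. Once eq.~(\ref{eq:thm31}) is secured, the second claim is essentially immediate, because strong separability converts a location gap into a strictly positive value gap that value-convergence forbids.
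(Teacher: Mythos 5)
Your overall architecture---prove the value convergence in eq.~(\ref{eq:thm31}) first, then use strong separability to convert a hypothetical location gap into a value gap and contradict it---matches the paper's. Your handling of eq.~(\ref{eq:thm32}) is in fact executed more carefully than the paper's own write-up: you extract a genuine subsequence and a separation constant $\eta>0$ from Assumption~\ref{assumption2}, whereas the paper loosely reuses the same $\epsilon$ from the first part and phrases the negation as a single large $K$. The substantive difference, and the problem, is in how you prove eq.~(\ref{eq:thm31}).

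Your lower bound hinges on producing a bounded set $B_\epsilon\subseteq\{\vx\in\Omega: f(\vx)<\epsilon\}$ of \emph{positive Lebesgue measure}, so that $\sup_{\Omega}g_k\geqslant |B_\epsilon|^{-1}e^{-k\epsilon}\int_{B_\epsilon}\pi\,\dd\vx$ with $\int_{B_\epsilon}\pi\,\dd\vx>0$. Continuity of $f$ only makes $\{f<\epsilon\}$ relatively open in $\Omega$, and since Assumption~\ref{def:1} allows $\Omega$ to be an arbitrary subset of $\mathbb{R}^d$, relative openness does not yield positive Lebesgue measure. Concretely, take $\Omega=[0,1]\cup\{2\}\subseteq\mathbb{R}$, $f$ continuous on $\Omega$ with $f(2)=0$ and $f\geqslant 1$ on $[0,1]$, and $\pi$ the standard normal density: every hypothesis of the theorem holds, $m_k$ is well defined, and the conclusion is true (indeed $\vx_k^*=2$ for all large $k$), yet for $\epsilon<1$ the set $\{f<\epsilon\}=\{2\}$ is Lebesgue-null, so no admissible $B_\epsilon$ exists and your squeeze cannot even start. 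The paper's proof avoids integrals entirely: since $\vx_k^*$ maximizes $m_k$ iff it minimizes $f-\frac{1}{k}\log\pi$ over $\Omega$, a pointwise comparison against any single $\tilde{\vx}\in\Omega^*$ gives $f(\vx_k^*)\leqslant f^*+\frac{1}{k}\log\left(B/\pi(\tilde{\vx})\right)$, using only $\pi\leqslant B$ and the full-support property $\pi(\tilde{\vx})>0$; letting $k\to\infty$ finishes the argument. To repair your proof you should either adopt that one-line pointwise comparison, or add an explicit regularity hypothesis on $\Omega$ (e.g.\ $\Omega$ open, or equal to the closure of its interior) under which your positive-measure set $B_\epsilon$ genuinely exists---with that extra assumption your integral argument is correct, but it is strictly less general than the paper's.
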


The proof is available in Section \ref{sec:proof3}. This theorem establishes the convergence of the sequence ${\vx_k^*}$ toward the global minima set $\Omega^*$ by quantifying the metric $\min_{ \tilde{\vx} \in \Omega^*} ||\vx_k^* - \tilde{\vx}||$. This metric serves as a measure of divergence between the iteratively obtained minima $\vx_k^*$ for each $k$ and elements from the true global minima set $\Omega^*$. The empirical evaluations in Section \ref{sec:exp} corroborate the algorithm's efficacy in identifying discrepancies in both the optimal value and corresponding minima.

\section{ProGO: A New Probabilistic Global Optimization Method} \label{sec:progo}

This section describes Latent Slice Sampler (LSS) in Section \ref{sec:lss}, followed by the ProGO algorithm in Section \ref{sec:progoalgo}.

\subsection{Latent Slice Sampler} \label{sec:lss} Compared to traditional Markov Chain Monte Carlo techniques like the Metropolis–Hastings algorithm, slice sampling (SS) provides benefits such as reduced asymptotic variance and accelerated convergence \citep{mira2002efficiency, neal2003slice, roberts1999convergence}. From several SS variants such as Elliptical SS by \citet{murray2010elliptical} and Polar SS by \citet{rudolf2023dimension}, we adopt the LSS introduced by \citet{li2023latent} because it eliminates the requirement for proposal distribution and improves efficiency in high-dimensional sampling. 

For a detailed understanding of LSS, consider the target distribution as a minima distribution $m(\vx)$ for a $d$-dimensional variable $\vx$. By incorporating slice variables $w$, $\vs=(s_1, \cdots, s_d)^\top$, and $\vl=(l_1, \cdots, l_d)^\top$, the joint density can be formulated as follow:

$$
p(\vx, w, \vs, \vl) = \mathbb{I}\left(m(\vx)>w\right) p(\vs) \prod_{j=1}^d \frac{\mathbb{I}\left(\vx_j-\vs_j/2<\vl_j<\vx_j+\vs_j/2\right)}{\vs_j}.
$$
Each $\vs_j$ for $j=1, \cdots, d$ is assumed to follow an independent gamma distribution with a shape parameter of 2 and scale parameter $\beta$ of 20, following \citet{luo2018minima}. Let $\vx^{(t)}$ represent the sample obtained after $t^{th}$ iteration, the full LSS algorithm implemented via Gibbs sampling is presented in Algorithm \ref{alg:lss}.
 
\begin{algorithm}[t]
\caption{Latent Slice Sampler for ProGO (LSS-ProGO)\label{alg:lss}}
\KwIn{Target probability distribution $\pi: \Omega \rightarrow [0,1]$, sample size $N$, initialization $\vx^{(0)}$, burn-in period $n_b$}
Initialize $\vx=\vx^{(0)}$,  $w^{(0)} \sim U(0, \pi(\vx^{(0)}))$, $\vs^{(0)}_j \sim \mathrm{Gamma}(2, \beta)$, $\mathrm{for}$ $j=1, \cdots, d,$ $\vl^{(0)} \sim U(\vx^{(0)} - \vs^{(0)} / 2, \vx^{(0)} + \vs^{(0)} / 2)$ \\
 \For{$\mathrm{iteration}$ $t\in \{1, 2, \cdots, N + n_b\}$} {
      $\va \leftarrow \vl^{(t-1)}-\vs^{(t-1)}/2$ \\
      $\vb \leftarrow \vl^{(t-1)}+\vs^{(t-1)}/2$ \\
      \While{$\vx \notin \{\vx: \pi(\vx)>w^{(t-1)}\}$}{
          \For{ $\mathrm{dimension}$ $j \in \{1, \cdots, d\}$}{
            \eIf{$\vx < \vx_j$}{
              $\va_j \leftarrow \max(\va_j, \vx_j)$
              }{
              $\vb_j \leftarrow \min(\vb_j, \vx_j)$
              }
          }
        $\vx \sim U(\va, \vb)$ \\
      }
  $\vx^{(t)} \leftarrow \vx$ \\
  $w^{(t)} \sim U(0, \pi(\vx^{(t)}))$ \\
  $\vs^{(t)} \sim e^{\beta} + 2|\vl^{(t)} - \vx^{(t)}| $ \\
  $\vl^{(t)} \sim U(\vx^{(t)} - \vs^{(t)} / 2, \vx^{(t)} + \vs^{(t)} / 2)$ \\
 }

\KwOut{Samples $\{\vx^{(n_b+1)}, \cdots, \vx^{(n_b+N)}\}$ from target probability distribution $\pi$} 
\end{algorithm}

\paragraph{One Dimensional Illustration.}
To demonstrate the rationale behind using the nascent minima distribution for global optimization and to evaluate the efficacy of LSS, we consider a one-dimensional example adapted from \citet{luo2018minima}, given by $f(x)=\cos(x^2)+ x/5 +1,  x\in[0,5].$
This function possesses three local minima and a singular global minima at $f(x)=0.353$ when $x=1.756$.

\begin{figure}[h]
    \centering
    \begin{minipage}[t]{\linewidth}
		\centering
          \subcaptionbox{$k=0$}{ \label{fig:a}
          \includegraphics[width=.23\textwidth]{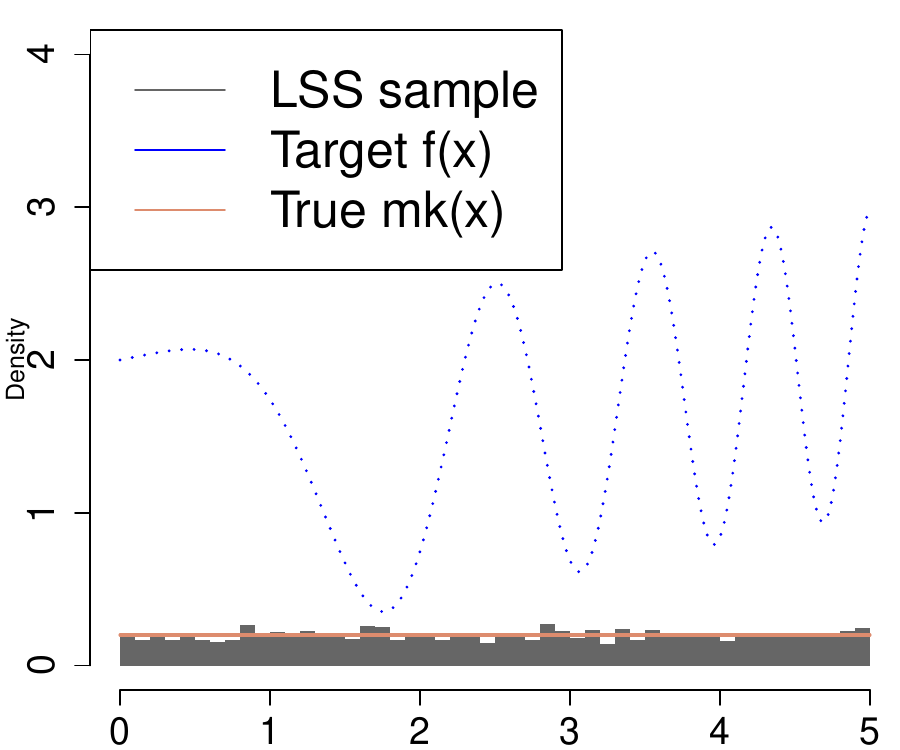}
          }
          \subcaptionbox{$k=1$}{
          \includegraphics[width=.23\textwidth]{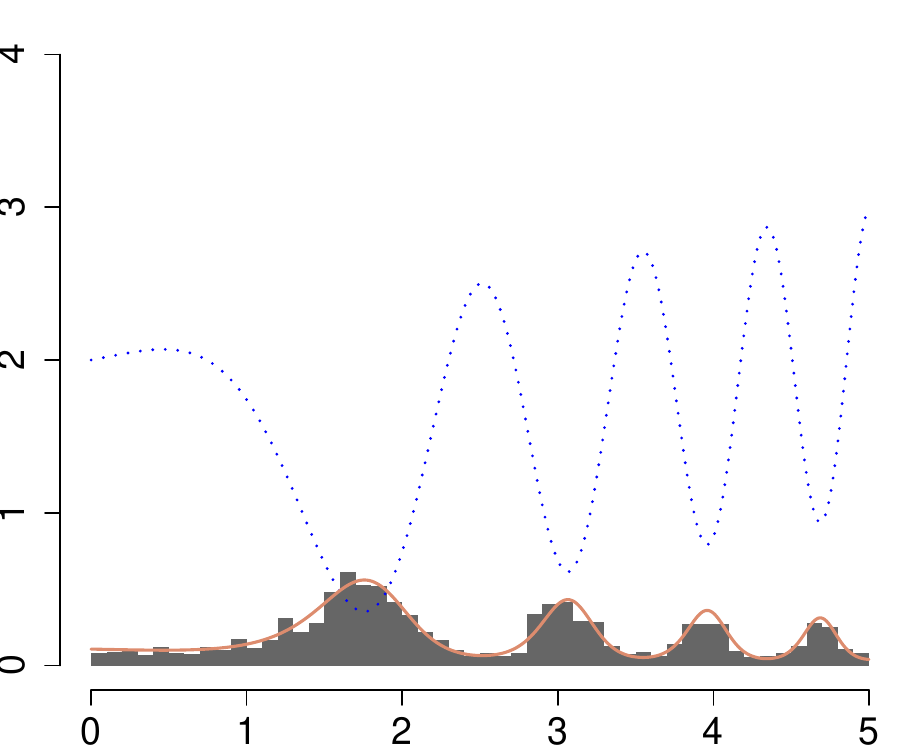}
          }
          \subcaptionbox{$k=3$}{
          \includegraphics[width=.23\textwidth]{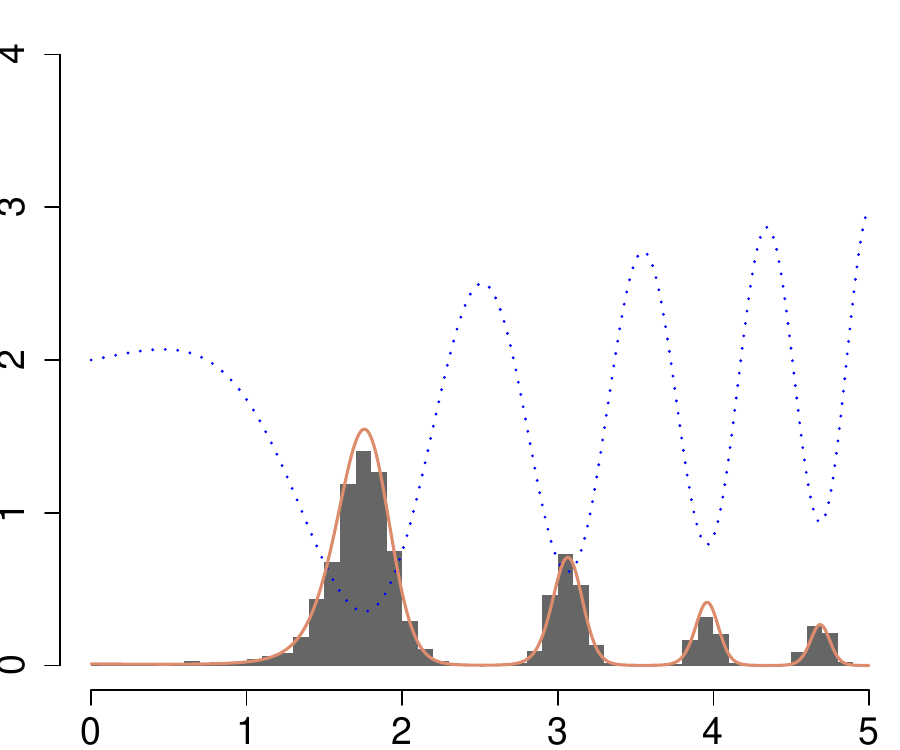}
          }
          \subcaptionbox{$k=9$}{
          \includegraphics[width=.23\textwidth]{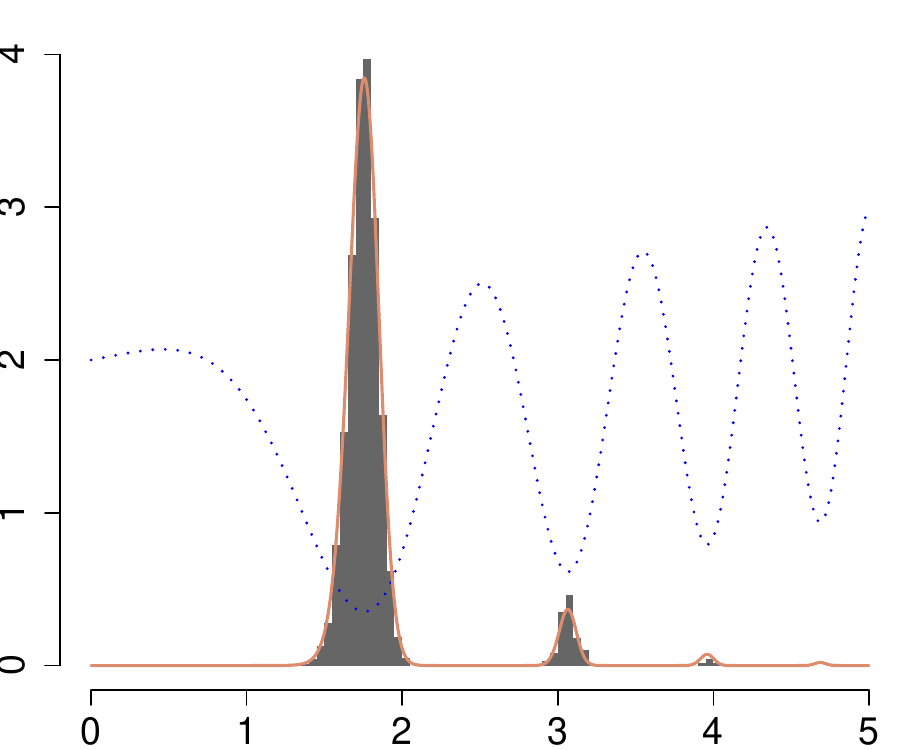}
          }
    \end{minipage}
    \caption{One dimensional illustration of nascent minima distribution $m_k(\vx)$ and its latent slice samples. In this illustration, $\pi(\vx)$ is assumed to satisfy Assumption \ref{def:1} and uniformly distributed within [0,5]. The target function $f(x)=\cos(x^2)+x/5+1$ is depicted in the blue curve. The true $m_k(\vx)$ with $k=0, 1, 3, 9$ are shown as a red curve. The black histograms represent the latent slice samples of $m_k(\vx)$. \label{fig:lsspbmd} }

\end{figure}

As depicted in Figure \ref{fig:lsspbmd}, the density distributions generated through LSS closely align with the actual minima distribution across various $k$. Notably, when $k=0$, $m_0(\vx) = \pi(\vx)$ corresponds to a uniform distribution, as shown in Figure \ref{fig:lsspbmd}(a). A higher value of $k$ leads to a minima distribution where the probability is increasingly focused on the global optima.

\subsection{ProGO} \label{sec:progoalgo}
We outline the algorithm of ProGO in Algorithm \ref{alg:progo}. The step size is incremented by $\triangle k$ for each iteration, chosen as $\triangle k=(e-1)k$. Such choice is motivated by the inverse shrinkage rate of $m_k(\vx)$ with respect to $k$, as demonstrated in \citet{luo2018minima}. The starting value of $k$ establishes the subsequent sequence $\{k, ke, ke^2, \cdots, ke^T\}$ across $T$ iterations, leading to a converging sequence of $\{f(\vx_k^*), f(\vx_{ke}^*), f(\vx_{ke^2}^*), \cdots, f(\vx_{ke^T}^*)\}$, where $x_k^* \in \{\vx \in \Omega: m_k(\vx) \geqslant m_k(\tilde{\vx}), \forall \tilde{\vx} \in \Omega\}$. Based on our preliminary results, the initial value of $k$ is set to 5.

\begin{algorithm}[t]
\caption{ProGO Algorithm}\label{alg:progo}
\KwIn{Target function $f: \Omega \rightarrow \mathbb{R}$, probability distribution function $\pi(\cdot)$, max iteration number $T=200$, sample size $N$, starting point $\vx^{(0)}$, burn-in period $n_b$}
Initialize $k=5$\\
 \For{$\mathrm{iteration}$ $t \in \{1, 2, \cdots, T\}$} {
 $\vx^{(1)}, \cdots, \vx^{(N)} \sim \text{LSS-ProGO}(m_k( \cdot~ ;\pi, f), N, \vx^{(0)}, n_b)$ \\
 $\tilde{\vx}^{(t)} \leftarrow \argmax_{i \in \{1, 2, \cdots, N\}} m_k(\vx^{(i)};\pi, f))$ \\
 $k \leftarrow k + (e-1) k$
 }
\KwOut{$\argmin_{t \in \{1, 2, \cdots, T\}} f(\tilde{\vx}^{(t)})$} 
\end{algorithm}

It is noteworthy that the output of ProGO delivers more than just an optimum value; it also provides the sample sets from the minima distribution. This provides valuable information about the distributional properties of local minima, as exemplified in Figure \ref{fig:lsspbmd}.

\section{Experiments} \label{sec:exp}
Test functions, evaluation metrics, and benchmarking methods play crucial roles in validating the performance of an optimization algorithm. While a wide array of test functions exists in the literature \citep{jamil2013literature}, the Ackley function—formulated initially by \citet{ackley2012connectionist}—and the Levy function remain prevalent choices, as corroborated by recent studies like \citet{shu2022zeroth}. 

As for evaluation criteria, we use the following metrics of function log regret and minima log regret to capture discrepancies in both $f(\vx)$ and $\vx$.
\begin{definition} \label{def:met}
Given $\tilde{\vx}$ as an estimated optima in a $d$-dimension space, the \textbf{function log regret} is defined as:
\begin{equation}
    r_f = \log \left(f(\tilde{\vx}) - f^* \right),
\end{equation}
quantifying the deviation between the estimated and true global optimum. The \textbf{minima log regret} is formulated as:
\begin{equation}
    r_m = \log   \frac{ || \tilde{\vx} - \vx^* || }{ \sqrt{d} } ,
\end{equation}
which quantifies the discrepancy between the estimated and true global minima. 
\end{definition}

The experimental design and the selection of competing algorithms are mostly aligned with the framework presented in ZoRD \citep{shu2022zeroth} to ensure a consistent and fair evaluation (details in Section \ref{sec:supexp}). A computational budget capped at 200 iterations is allotted for each of the ten independent runs conducted in every experiment setting. The evaluated methods include:
\textbf{1) ZoRD}: Zeroth-order trajectory-informed derivative estimation \citep{shu2022zeroth}. \textbf{2) GD}: Gradient-Descent, directly using first-order information. \textbf{3) GLD}: Gradientless Descent \citep{golovin2019gradientless}.  \textbf{4) PRGF}: Prior-guided random gradient-free algorithm \citep{cheng2021convergence}. \textbf{5) RGF}: Random gradient-free method via finite difference \citep{nesterov2017random}. \textbf{6) GP-UCB}: Gaussian Process-Upper Confidence Bound \citep{srinivas2009gaussian}. \textbf{7) TuRBO}: Trust Region Bayesian Optimization \citep{eriksson2019scalable}. \textbf{8) ProGO (our approach)}: Probabilistic Global Optimization.


\subsection{Ackley}
The Ackley function serves as a prominent benchmark for evaluating optimization algorithms. It is characterized as a continuous, differentiable, multimodal, and non-convex function, thereby posing significant optimization challenges. Mathematically, the $d$-dimensional Ackley function is given by:
\begin{equation} \label{eq:ackley}
    f({\vx})=-a \exp \left(-b \sqrt{\frac{1}{d} \sum_{i=1}^d \vx_i^2}\right)-\exp \left(\frac{1}{d} \sum_{i=1}^d \cos \left(c \vx_i\right)\right)+a+\exp (1),
\end{equation}
where recommended parameter values are $a=20, b=0.2$ and $c=2 \pi$ \citep{adorio2005mvf}. As shown in Figure \ref{fig:ackley1000d} (a), the Ackley function features numerous local minima and a central global minima at $f^* = 0$ and $\vx^* = (0, \cdots, 0)^\top$, presenting multiple local minima traps for optimization algorithms.
\begin{figure}[h]
    \centering
    \begin{minipage}[t]{\linewidth}
        \centering
        \subcaptionbox{Ackley (d=20)}
        {\includegraphics[width=\textwidth]{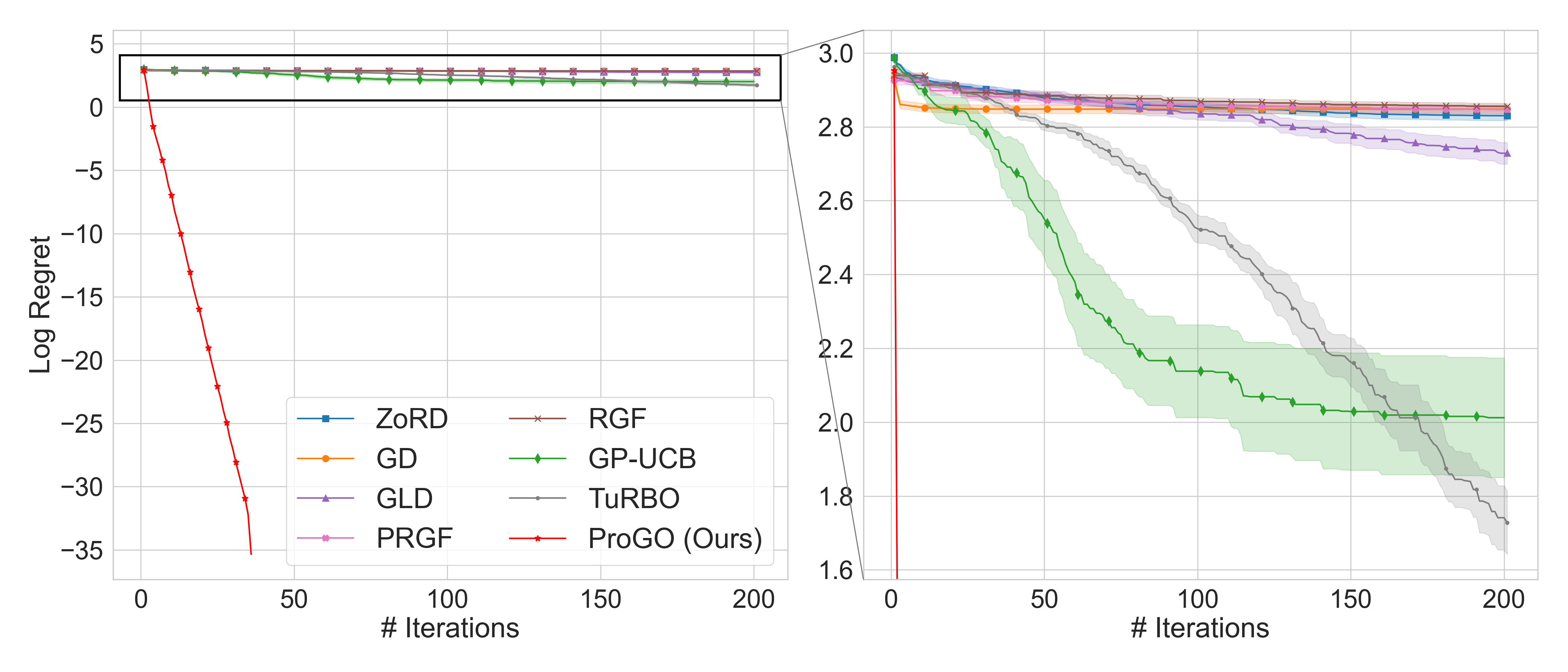}}
    \end{minipage}
    \begin{minipage}[t]{\linewidth}
        \centering
        \subcaptionbox{Ackley (d=40)}
        {\includegraphics[width=\textwidth]{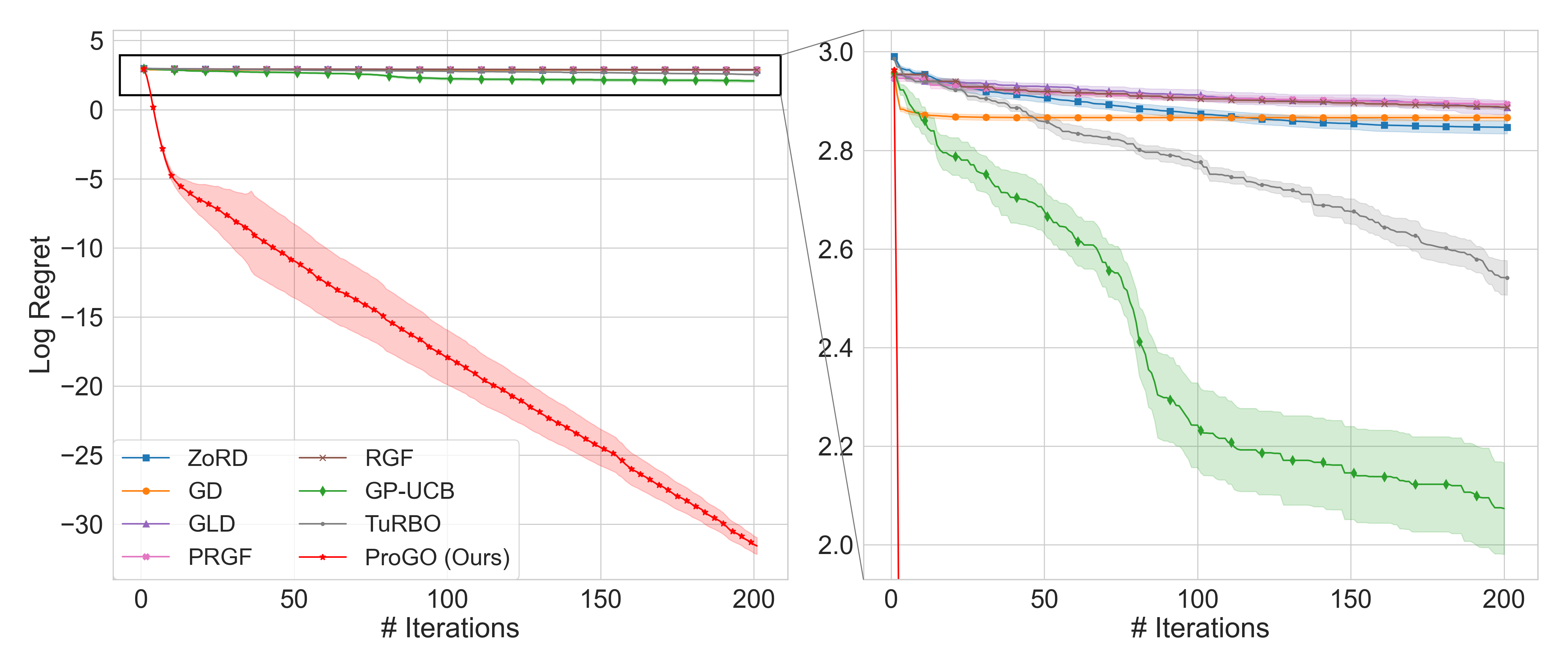}}
    \end{minipage}
\caption{Evaluation of ProGO and competing methods applied to the Ackley function with $d=20, 40$. The $x$-axis represents the iteration count, while the $y$-axis denotes the average log-scaled regret. Each curve shows the mean $\pm$ standard error across ten independent runs. \label{fig:ackley40d}}
\end{figure}

\begin{table}[H]
	\begin{center}
		\caption{Comparison of performance on the Ackley function for dimensions $d=20, 40, 1000$: Each optimization method has ten independent runs. Accuracy is quantified using the average function log regret ($r_f$) and the average minima log regret ($r_m$) across these ten runs. Computational efficiency is represented by the average runtime ($t$) across all ten runs, measured in seconds.\label{tab:ackleyexp}}
  \resizebox{\linewidth}{!}{  
    \begin{tabular}{@{}c rrr c rrr c rrr@{}} \toprule
        & \multicolumn{3}{c}{$d=20$} & & \multicolumn{3}{c}{$d=40$} &  & \multicolumn{3}{c}{$d=1000$} \\ \cmidrule{2-4} \cmidrule{6-8} \cmidrule {10-12} 
        \multicolumn{1}{c}{ Method}  &\multicolumn{1}{c}{\bf ${r_f}$} &\multicolumn{1}{c}{\bf ${r_m}$} 
        &\multicolumn{1}{c}{\bf ${t~ (\mathrm{s})}$}  & &\multicolumn{1}{c}{\bf ${r_f}$} &\multicolumn{1}{c}{\bf ${r_m}$} 
        &\multicolumn{1}{c}{\bf ${t~ (\mathrm{s})}$}  & &\multicolumn{1}{c}{\bf ${r_f}$} &\multicolumn{1}{c}{${r_m}$} 
        &\multicolumn{1}{c}{\bf ${t~ (\mathrm{s})}$}  \\ \midrule
  ZoRD   &            2.83 &          1.68 & 1265.43 & &             2.85 &           1.72 & 1092.04 & &             2.73 &           1.58 &  406.16\\
    GD   &           2.85 &          1.72 &     0.12 & &             2.87 &           1.75 &     0.13 & &             2.89 &           1.75 &     0.14\\
   GLD  &            2.73 &          1.65 &     0.38 & &             2.89 &           1.70 &     0.39 & &             2.97 &           1.75 &     0.40 \\
  PRGF   &            2.85 &          1.70 &     0.06 & &             2.89 &           1.74 &     0.07 & &             2.97 &           1.75 &     0.08\\
   RGF   &            2.86 &          1.66 &     0.06 & &             2.89 &           1.72 &     0.07 & &             2.97 &           1.74 &     0.08 \\
GP-UCB   &          2.01 &          1.62 &  132.13 & &             2.07 &           1.63 &  323.12 & &             2.93 &           1.74 & 1132.61 \\
 TuRBO   &            1.73 &          1.60 &   30.83 & &             2.54 &           1.62 &   83.84  & &             2.95 &           1.73 &  272.56\\
 \textbf{ProGO}   & \textbf{-35.35} &        \textbf{-35.86} &    6.77 & &           \textbf{-31.56} &          \textbf{-9.50} &   16.25 & &             \textbf{1.99} &           \textbf{0.52} &  280.05\\\bottomrule
		\end{tabular}
}
	\end{center}
    \end{table}

Our empirical evaluations (see Table \ref{tab:ackleyexp}) span dimensions $d=20, 40,$ and $1000$. Across all dimensions, ProGO consistently outperforms other methods, achieving significantly lower function log regret and lower minima log regret. Moreover, Figure \ref{fig:ackley1000d}(b) corroborates ProGO's geometric rate of convergence even on high dimensions (see Figure \ref{fig:ackley40d} for results on $d=20$ and $d=40$).

\begin{figure}[h]
	\centering
	\begin{minipage}[t]{\linewidth}
		\centering
  \subcaptionbox{Levy (d=40)}
      {\includegraphics[width=\textwidth]{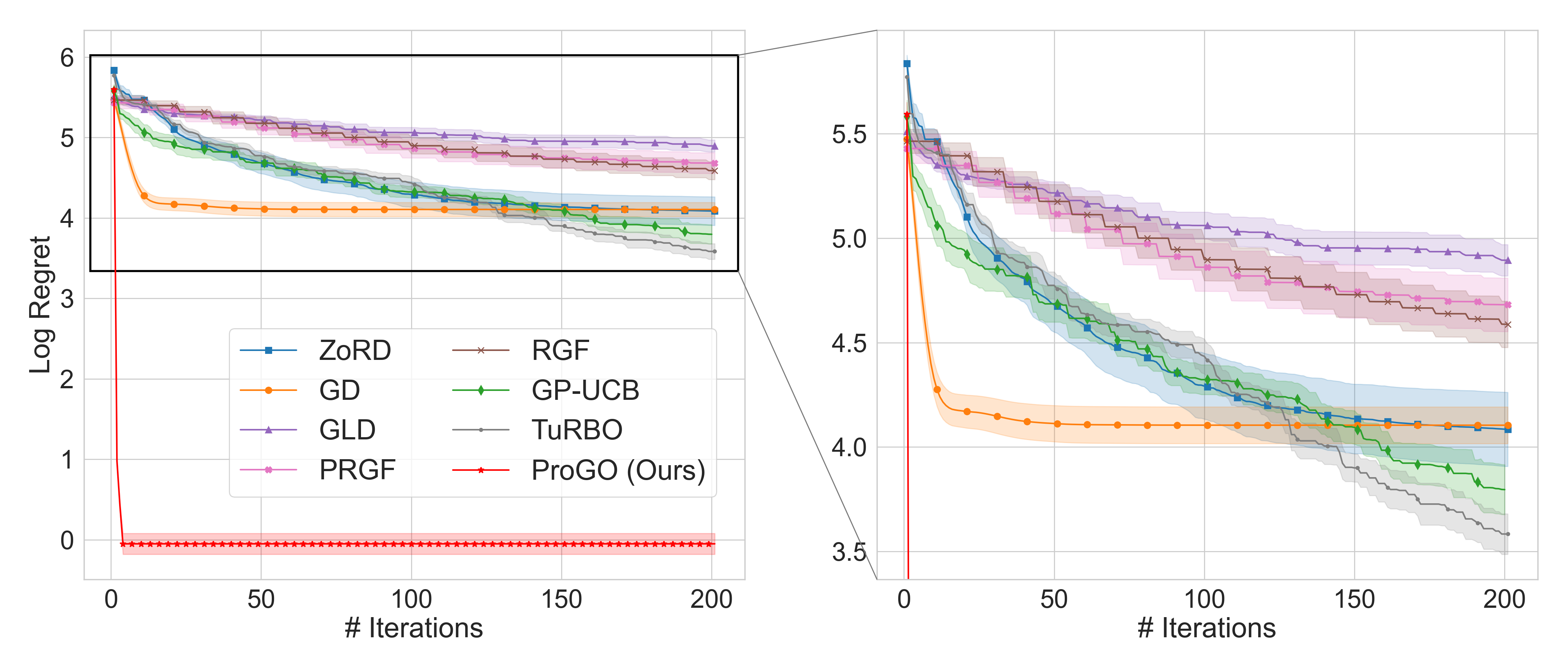}}
      
	\end{minipage}
	\centering
        \begin{minipage}[t]{\linewidth}
		\centering
		\subcaptionbox{Levy (d=100)}
      {\includegraphics[width=\textwidth]{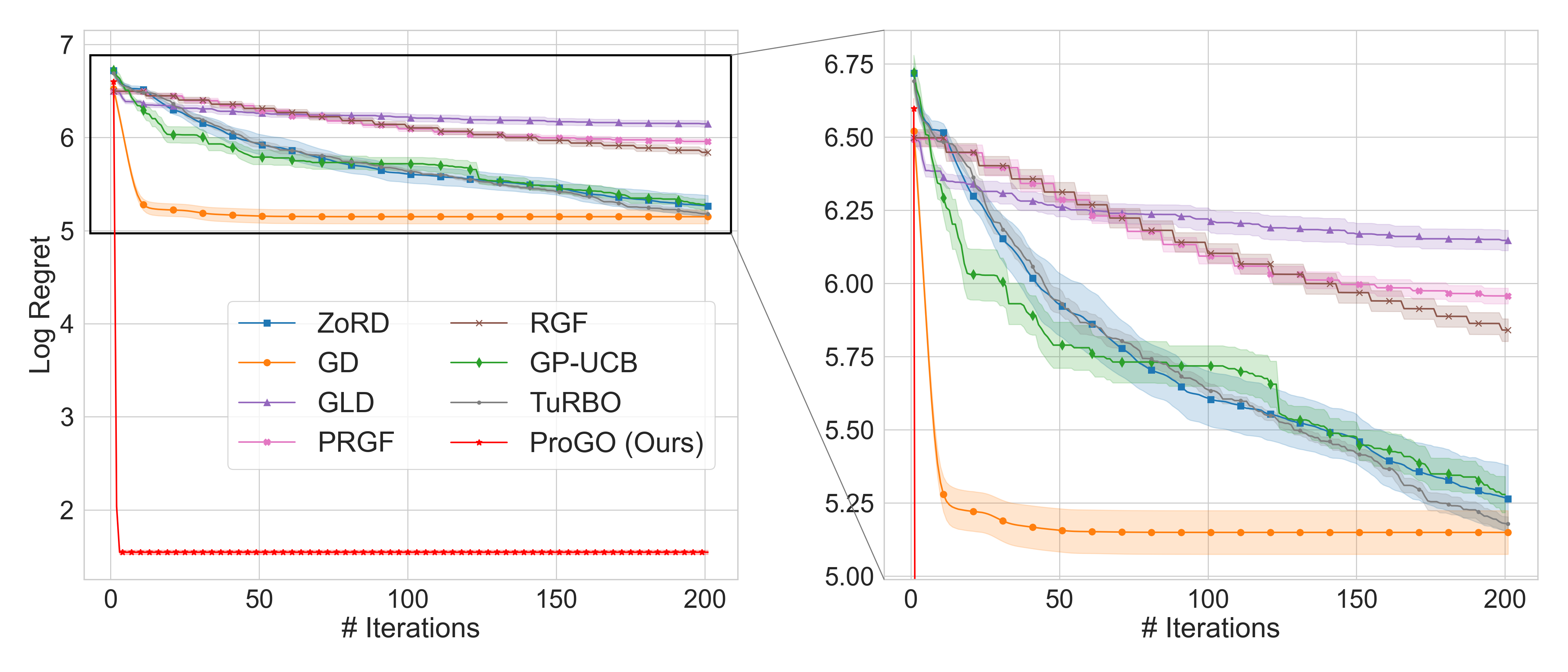}}
	\end{minipage}
	\caption{Evaluation of ProGO and competing methods applied to the Ackley function with $d=40, 100$. The $x$-axis represents the iteration count, while the $y$-axis denotes the average log-scaled regret. Each curve shows the mean $\pm$ standard error across ten independent runs.  \label{fig:levy100d}}
\end{figure}

\subsection{Levy}
\begin{figure}[H]
	\centering
        \begin{minipage}[t]{\linewidth}
		\centering
  \subcaptionbox{Surface of Levy ($d=2$)}{
  \includegraphics[width=.28\textwidth]{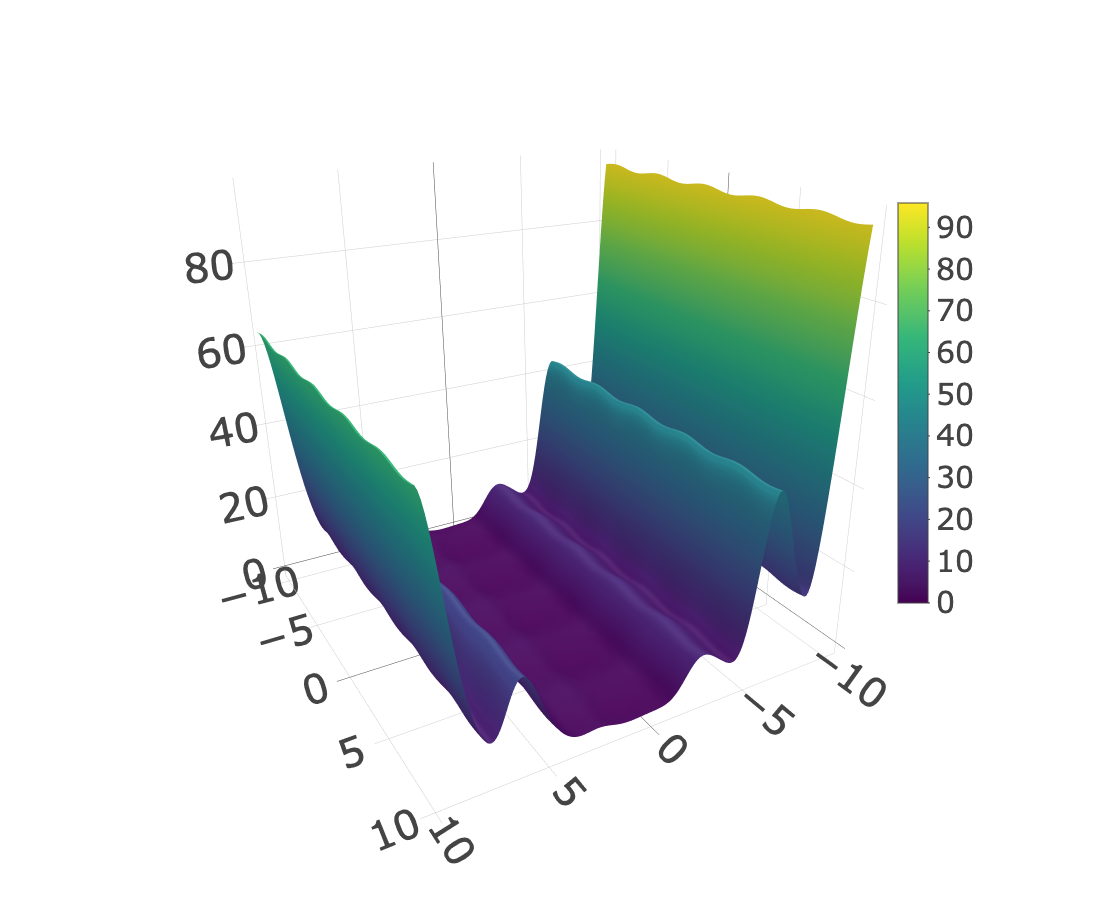}
  }
  \subcaptionbox{Results on Levy ($d=1000$)}{
  \includegraphics[width=.68\textwidth]{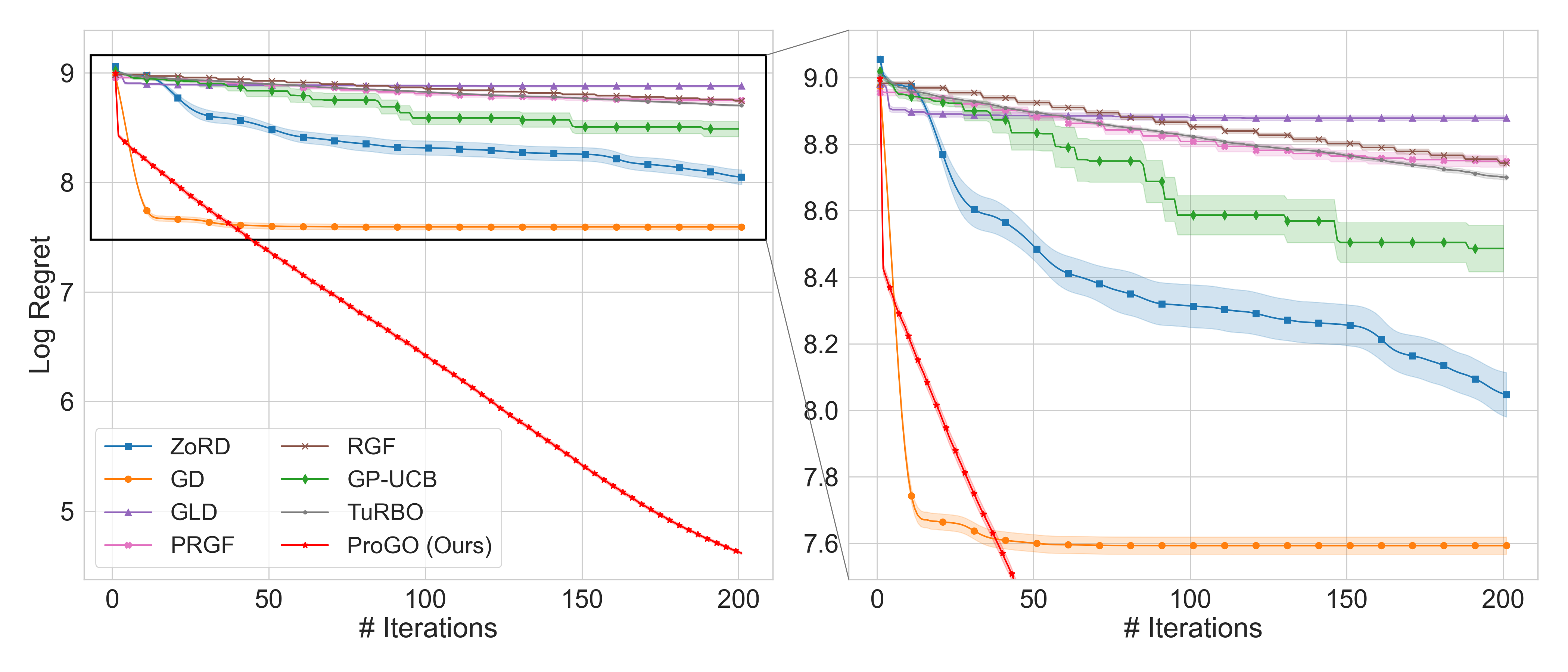}
  }
	\end{minipage}
	\caption{(a) Visualization of the Levy function for $d=2$. (b) Evaluation of ProGO and competing methods applied to the Levy function in a high-dimensional setting with $d=1000$. The $x$-axis represents the iteration count, while the $y$-axis denotes the average log-scaled regret. Each curve shows the mean $\pm$ standard error across ten independent runs. \label{fig:levy1000d} }
\end{figure}

The Levy function is another frequently used test function in optimization research, as in the work of \citet{shu2022zeroth}. It is a continuous and non-convex function defined as:
\begin{equation} \label{eq:levy}
f(\vx)=\sin ^2\left(\pi w_1\right)+\sum_{i=1}^{d-1}\left(w_i-1\right)^2\left[1+10 \sin ^2\left(\pi w_i+1\right)\right]+\left(w_d-1\right)^2\left[1+\sin ^2\left(2 \pi w_d\right)\right], 
\end{equation}
where  $w_i=1+ (\vx_i-1)/4,  \forall i=1, \ldots, d$. The global minimum is $f^*=0$ attained at ${\vx}^*=(1, \ldots, 1)^\top$. As shown in Figure \ref{fig:levy1000d}, the Levy function presents a more complex optimization challenge than the Ackley function due to the substantially flatter area that surrounds its global optima.

\begin{table}[H]
\begin{center}
\caption{Comparison of performance on the Levy function for dimensions $d=40, 100, 1000$: Each optimization method undergoes ten independent runs. Accuracy is quantified using the average function log regret ($r_f$) and the average minima log regret ($r_m$) across these ten runs. Computational efficiency is represented by the average runtime ($t$) across all ten runs, measured in seconds. \label{tab:levyexp}}

    \resizebox{\linewidth}{!}{  
    \begin{tabular}{@{}c rrr c rrr c rrr@{}} \toprule
        & \multicolumn{3}{c}{$d=40$} & & \multicolumn{3}{c}{$d=100$} &  & \multicolumn{3}{c}{$d=1000$} \\ \cmidrule{2-4} \cmidrule{6-8} \cmidrule {10-12} 
         \multicolumn{1}{c}{ Method}  &\multicolumn{1}{c}{\bf ${r_f}$} &\multicolumn{1}{c}{\bf ${r_m}$} 
        &\multicolumn{1}{c}{\bf ${t~ (\mathrm{s})}$}  & &\multicolumn{1}{c}{\bf ${r_f}$} &\multicolumn{1}{c}{\bf ${r_m}$} 
        &\multicolumn{1}{c}{\bf ${t~ (\mathrm{s})}$}  & &\multicolumn{1}{c}{\bf ${r_f}$} &\multicolumn{1}{c}{${r_m}$} 
        &\multicolumn{1}{c}{\bf ${t~ (\mathrm{s})}$}  \\ \midrule
        ZoRD &            4.08 &          1.56 & 585.69 & & 5.26 &          1.56 & 1514.97 & &            8.05 &          1.58 &  272.29 \\
        GD &            4.10 &          1.58 &    0.16 & &            5.15 &          1.61 &     0.15 & &            7.59 &          1.59 &     0.14\\
        GLD &            4.89 &          1.59 &    0.40 & &            6.15 &          1.60 &     0.39 & &            8.88 &          1.59 &     0.39\\
        PRGF &            4.68 &          1.59 &    0.08 & &           5.96 &          1.61 &     0.07 &  &            8.75 &          1.60 &     0.07 \\
        RGF &            4.59 &          1.58 &    0.08 & &           5.84 &          1.62 &     0.08 & &            8.74 &          1.59 &     0.09\\
        GP-UCB &            3.80 &          1.58 & 216.83 & &           5.28 &          1.58 &  651.38 & &             8.49 &          1.58 & 1400.41\\
        TuRBO &            3.58 &          1.56 &  77.66 & &           5.18 &          1.55 &  197.78 & &            8.70 &          1.59 &  265.71 \\
        \textbf{ProGO} &           \textbf{-0.05} &         \textbf{-0.96} &   11.85 & &            \textbf{1.55} &         \textbf{-0.50} &    27.33 & &            \textbf{4.62} &         \textbf{-0.01} &    311.99\\   \bottomrule
    \end{tabular}
    }
\end{center}
\end{table}

Empirical evaluation of the Levy function for dimensions $d=40, 100, 1000$ is presented in Table \ref{tab:levyexp}. Notably, ProGO demonstrates significantly lower log regrets relative to other competing methods across all dimensions. Furthermore, Figure \ref{fig:levy1000d} and Figure \ref{fig:levy100d} show that ProGO exhibits a markedly faster convergence rate compared to competing gradient-based, zeroth-order, and Bayesian optimization methods. Notice that Gradient Descent, depicted in orange, initially exhibits rapid convergence but is then trapped in local optima.

\section{Conclusion} \label{sec:conc}
In this paper, we introduce ProGO, a novel probabilistic global optimization algorithm leveraging minima distribution theory and latent slice sampling technique. Our methodology represents a significant departure from conventional gradient-based methods, offering robust convergence guarantees for global optima while preserving computational efficiency without using gradient information. 

Specifically, our contributions are threefold: We extend \citet{luo2018minima}'s theoretical framework to non-compact sets and prove its global convergence. Based on the generalized framework, we implement the ProGO algorithm, integrating a latent slice sampler for enhanced computational efficiency, especially for high dimensions. Finally, comprehensive experiments demonstrate ProGO's outstanding performance over state-of-the-art methods in terms of accuracy and convergence speed on various functions and dimensions.

However, it is worth noting that ProGO may not be suitable for optimization problems where function evaluation is computationally expensive. Future investigations on enhancing the algorithm's computational efficiency and extending the applicability of ProGO to more diverse problem domains may contribute to the growing field of global optimization.

\bibliographystyle{agsm} 
\bibliography{progo}

\appendix
\section{Proofs} \label{sec:proof}
\subsection{Proof for Theorem \ref{thm:1}} \label{sec:proof1}
\begin{proof}[Proof for Theorem \ref{thm:1}]

Recall that $\Omega^* = \{\vx \in \Omega: f(\vx) = 0\}$ without loss of generalizability. For any $\epsilon > 0$, define $\Omega_{\epsilon} = \{ \vx \in \Omega: f(\vx) < \epsilon\}$ and $\Omega_{\epsilon}^c = \Omega \setminus \Omega_{\epsilon}$, then

\begin{align*}
    \int_{\Omega} f(\vx) m_k(\vx) d\vx &= \frac{ \int_{\Omega_{\frac{\epsilon}{2}}} f(\vx) \cdot e^{-kf(\vx)} \pi(\vx) \mathrm{d}\vx +  \int_{\Omega_{\frac{\epsilon}{2}}^c} f(\vx) \cdot  e^{-kf(\vx)} \cdot \pi(\vx) \mathrm{d}\vx}{  \int_{\Omega} e^{-kf(t)} \cdot \pi(t) \mathrm{d}t} \\
    & < \frac{\epsilon}{2} \cdot \frac{ \int_{\Omega_{\frac{\epsilon}{2}}} e^{-kf(\vx)} \cdot \pi(\vx) \mathrm{d}\vx}{  \int_{\Omega} e^{-kf(t)} \cdot \pi(t) \mathrm{d}t} + \frac{ \int_{\Omega_{\frac{\epsilon}{2}}^c} f(\vx) \cdot e^{-kf(\vx)} \cdot \pi(\vx) \mathrm{d}\vx}{  \int_{\Omega_{\frac{\epsilon}{4}}} e^{-kf(t)} \cdot \pi(t) \mathrm{d}t} \\
    & \leqslant \frac{\epsilon}{2} \cdot 1 + \frac{ \int_{\Omega_{\frac{\epsilon}{2}}^c} f(\vx) e^{-k \left(f(\vx) - \frac{\epsilon}{4} \right)} \cdot \pi(\vx) \mathrm{d}\vx}{  \int_{\Omega_{\frac{\epsilon}{4}}}  \pi(t) \mathrm{d}t} \\
    & = \frac{\epsilon}{2} + \int_{\Omega_{\frac{\epsilon}{2}}^c} \frac{f(\vx)\pi(\vx) e^{-k \left(f(\vx) - \frac{\epsilon}{4} \right)}}{\int_{\Omega_{\frac{\epsilon}{4}}}  \pi(t) \mathrm{d}t}   \mathrm{d}\vx \stackrel{\Delta}{=} \frac{\epsilon}{2} + \int_{\Omega_{\frac{\epsilon}{2}}^c} g_k(\vx) \mathrm{d}\vx.
\end{align*}

The first inequality lies in $f(\vx) < \frac{\epsilon}{2}$ for any $\vx \in \Omega_{\frac{\epsilon}{2}}$ and $1/{  \int_{\Omega} e^{-kf(t)} \cdot \pi(t) \mathrm{d}t} < 1/{  \int_{\Omega_{\frac{\epsilon}{2}}} e^{-kf(t)} \cdot \pi(t) \mathrm{d}t}$. The second inequality is given by ${ \int_{\Omega_{\frac{\epsilon}{2}}} e^{-kf(\vx)} \cdot \pi(\vx) \mathrm{d}\vx} < {  \int_{\Omega} e^{-kf(\vx)} \cdot \pi(\vx) \mathrm{d}\vx}$ and $f(\vx) < \frac{\epsilon}{4}$ for any $\vx \in \Omega_{\frac{\epsilon}{4}}$. Define $g_k(\vx) = \frac{f(\vx)\pi(\vx) e^{-k \left(f(\vx) - \frac{\epsilon}{4} \right)}}{\int_{\Omega_{\frac{\epsilon}{4}}}  \pi(t) \mathrm{d}t}$ for $\vx \in {\Omega_{\frac{\epsilon}{2}}^c}$, then $\{ g_k(\vx)\}_{k=1}^{\infty}$ is a sequence of nonnegative functions that monotonously decreases to 0 when k goes to infinity and converges to zero, i.e., $\lim_{k \rightarrow \infty}g_k(\vx) = 0$. Hence, by monotone convergence theorem, $
\lim_{k \rightarrow \infty} { \int_{\Omega_{\frac{\epsilon}{2}}^c} g_k(\vx) \mathrm{d}\vx} = 0.
$
Consequently, for any $\epsilon > 0$, there exists a large $K_0$, such that $\int_{\Omega_{\frac{\epsilon}{2}}^c} g_k(\vx) \mathrm{d}\vx < \epsilon/2$ for any $k>K_0$, and thus 
\begin{align*}
    0  \leqslant \int_{\Omega} f(\vx) m_k(\vx) d\vx &< \frac{\epsilon}{2} + \frac{\epsilon}{2} = \epsilon,
\end{align*}
which proves $
\lim _{k \rightarrow \infty} \int_{\Omega} f(\vx) m_k(\vx) \mathrm{d} \vx=f^* (=0).$

\end{proof}

\subsection{Proof for Theorem \ref{thm:2}} \label{sec:proof2}
\begin{proof}[Proof for Theorem \ref{thm:2}]

For every $k \in \mathbb{R}$,
\begin{align*}
    \frac{\dd}{\dd k} \log m_k(\vx)  &= \frac{\dd}{\dd k} \left( -k f(\vx) + \log \pi(\vx) - \log \left\{\int_{\Omega} e^{-kf(\vt) } \cdot \pi(\vt) \dd \vt \right\} \right) \\
    & \stackrel{}{=}  -f(\vx) - \frac{  \frac{\dd}{\dd k} \left\{ \int_{\Omega} e^{-kf(\vt) } \cdot \pi(\vt) \dd \vt   \right\}    }{  \int_{\Omega} e^{-kf(\vt) } \cdot \pi(\vt) \dd \vt   }  \\ 
    & \stackrel{}{=}  -f(\vx) - \frac{  \int_{\Omega} (-f(\vt)) \cdot  e^{-kf(\vt) } \cdot \pi(\vt) d\vt    }{  \int_{\Omega} e^{-kf(\vt) } \cdot \pi(\vt) \dd \vt   }   \\ 
    &= \mathbb{E}_{m_k}(f) - f(\vx).
\end{align*}
Hence,
\begin{align*}
    \frac{\dd}{\dd k}m_k(\vx) = m_k(\vx) \cdot  \frac{\dd}{\dd k} \log m_k(\vx) = m_k(\vx) \cdot \left( \mathbb{E}_{m_k}(f) - f(\vx) \right).
\end{align*}
Then we have
\begin{align*}
    m^{(k+\Delta k)} (\vx) &= m_k(\vx) + \int_{k}^{k+\Delta k} \frac{\dd}{\dd \vv}m^{(\vv)}(\vx) \dd \vv \\
    & = m_k(\vx) + \int_{k}^{k+\Delta k} m^{(\vv)}(\vx) \cdot \left[\mathbb{E}^{(\vv)}(f) - f(\vx) \right] \dd \vv. 
\end{align*}
Then there exist a $\xi \in (k, k+\Delta k)$ such that
\begin{align*}
    \frac{\mathbb{E}^{(k+\Delta k)}(f)-\mathbb{E}_{m_k}(f)}{\Delta k} & =\frac{1}{\Delta k} \int_{\Omega} f(\vx)\left(m^{(k+\Delta k)}(\vx)-m_k(\vx)\right) \mathrm{d} \vx \\
    & =\frac{1}{\Delta k} \int_{\Omega} \int_k^{k+\Delta k} f(\vx) m^{(\vv)}(\vx) \left[\mathbb{E}^{(\vv)}(f) - f(\vx) \right] \mathrm{d} \vv \mathrm{~d} \vx \\
    & \stackrel{(R \ref{rmk:3})}{=} \frac{1}{\Delta k} \int_k^{k+\Delta k} \int_{\Omega}  f(\vx) m^{(\vv)}(\vx) \left[\mathbb{E}^{(\vv)}(f) - f(\vx) \right] \mathrm{d} \vx \mathrm{~d} \vv  \\
    & =\frac{1}{\Delta k} \int_k^{k+\Delta k}\left\{ \left[ \mathbb{E}^{(\vv)}(f) \right]^2 - \mathbb{E}^{(\vv)}(f^2) \right\} \mathrm{d} \vv \\
    & = \left[\mathbb{E}^{(\xi)}(f) \right]^2 - \mathbb{E}^{(\xi)}(f^2),
\end{align*}

where the exchangeability of the integral is proved by Fubini Theorem in Remark \ref{rmk:3}. Hence, we have
\begin{align*}
\frac{\mathrm{d} \mathbb{E}_{m_k}(f)}{\mathrm{d} k} & =\lim _{\Delta k \rightarrow 0} \frac{\mathbb{E}^{(k+\Delta k)}(f)-\mathbb{E}_{m_k}(f)}{\Delta k} \\
& = \left\{ \mathbb{E}_{m_k}(f) \right \}^2 - \mathbb{E}_{(k)}(f^2) \\
& = - \int_{\Omega} \left( f(\vx) -  \mathbb{E}_{m_k}(f)\right)^2 m_k (\vx) \mathrm{d} \vx \\
&= - \mathbb{V}{ar}^{(k)}(f) \leqslant 0,
\end{align*}
where the equality holds only when $\mathbb{V}{ar}^{(k)}(f) =0$, i.e., $f$ is a constant function on $\Omega$.

\begin{remark}[R3] \label{rmk:3} 
    Define $h_k(t) = te^{-kt}$ for any $t \in \mathbb{R}$, then its first derivative is $h_k^{\prime}(t)=(1-kt)e^{-kt}$. For any $t < \frac{1}{k}$, $h_k^{\prime}(t)>0$, and thus $g(t)$ is increasing when $t< \frac{1}{k}$; for any $t \geqslant \frac{1}{k}$, $h_k^{\prime}(t) \leqslant 0$, and thus $g(t)$ is non-increasing when $t \geqslant \frac{1}{k}$. Therefore, $h_k(t) \leqslant h_k(t=\frac{1}{k})=\frac{1}{ke}$. Define $g_k(t)=t^2 e^{-kt}$, then the absolute value of $g_k(t)$ has its upper bound as $\frac{4}{k^2e^2}$ using similar strategy.
    For any $k > 0$, the denominator of $\mathbb{E}^{(\vv)}(f) $ is a finite positive constant, where $0 < \int_{\Omega} e^{-k f(\vx)} \pi (\vx) \mathrm{d} \vx \stackrel{\Delta}{=} \alpha_k \leqslant e^{-k f^*}   \int_{\Omega} \pi (\vx) \mathrm{d} \vx \leqslant e^{-kf^*}$, and thus $\mathbb{E}^{(\vv)}(f)$ defined in the proof of Theorem \ref{thm:2} is bounded following:
\begin{equation}
    \begin{aligned}
    \mathbb{E}^{(\vv)}(f) &=  \frac{\int_{\Omega} f(\vx) e^{-\vv f(\vx)} \pi (\vx) \mathrm{d} \vx}{\int_{\Omega} e^{-\vv f(\vx)} \pi (\vx) \mathrm{d} \vx} \\
    & = \frac{\int_{\Omega} h_v(f(\vx)) \pi (\vx) \mathrm{d} \vx}{\int_{\Omega} e^{-\vv f(\vx)} \pi (\vx) \mathrm{d} \vx} \\
    & \leqslant \frac{ \frac{1}{ve} \int_{\Omega} \pi (\vx) \mathrm{d} \vx}{\int_{\Omega} e^{-\vv f(\vx)} \pi (\vx) \mathrm{d} \vx} = \frac{1}{ve\alpha_v}
\end{aligned}
\end{equation}

Similarly, $\mathbb{E}^{(\vv)}(f^2)$ is also bounded by:
\begin{equation}
    \begin{aligned}
    \mathbb{E}^{(\vv)}(f^2) &=  \frac{\int_{\Omega} f^2(\vx) e^{-\vv f(\vx)} \pi (\vx) \mathrm{d} \vx}{\int_{\Omega} e^{-\vv f(\vx)} \pi (\vx) \mathrm{d} \vx} \\
    & = \frac{\int_{\Omega} g_v(f(\vx)) \pi (\vx) \mathrm{d} \vx}{\int_{\Omega} e^{-\vv f(\vx)} \pi (\vx) \mathrm{d} \vx} \\
    & \leqslant \frac{ \frac{4}{\vv^2e^2} \int_{\Omega} \pi (\vx) \mathrm{d} \vx}{\int_{\Omega} e^{-\vv f(\vx)} \pi (\vx) \mathrm{d} \vx} = \frac{4}{\vv^2e^2\alpha_v}
\end{aligned}
\end{equation}

Hence, given bounded $\mathbb{E}^{(\vv)}(f)$ and bounded $\mathbb{E}^{(\vv)}(f)$,  the following order of integration is exchangeable by the Fubini theorem:

$$
\int_{\Omega} \int_k^{k+\Delta k} \mathbb{E}^{(\vv)}(f) \mathrm{d} \vv \mathrm{~d} \vx =  \int_k^{k+\Delta k} \int_{\Omega}  \mathbb{E}^{(\vv)}(f) \mathrm{d} \vx \mathrm{~d} \vv, 
$$
$$
\int_{\Omega} \int_k^{k+\Delta k} \mathbb{E}^{(\vv)}(f^2) \mathrm{d} \vv \mathrm{~d} \vx =  \int_k^{k+\Delta k} \int_{\Omega}  \mathbb{E}^{(\vv)}(f^2) \mathrm{d} \vx \mathrm{~d} \vv.
$$
\end{remark}
\end{proof}

\subsection{Proof for Theroem \ref{thm:3}} \label{sec:proof3}

\begin{proof}[Proof for Theroem \ref{thm:3}]
First, we prove eq. \ref{eq:thm31}. 

For $\forall k > 0$, denote $c_k=\log \left\{ \int_{\Omega} e^{-kf(\vt)} \pi(\vt) \dd\vt\right\} $, then 
\begin{align*}
    \operatorname*{argmax}_{\vx \in \Omega} m_k (\vx) &= \operatorname*{argmax}_{\vx \in \Omega} \log m_k (\vx) \\
    & = \operatorname*{argmax}_{\vx \in \Omega} \left( -k f(\vx) +  \log \pi(\vx) -c_k \right) \\
    & = \operatorname*{argmin}_{\vx \in \Omega} \left( f(\vx) - \frac{1}{k} \log \pi(\vx) \right),
\end{align*}
which indicates the maximizers of $m_k(\vx)$ are the minimizers of $\left( f(\vx) - \frac{1}{k} \log \pi(\vx) \right)$. Assume the upper bound for $\pi(\vx)$ is $B$, then for any $\vx_k^* \in \Omega_k$ and $\tilde{\vx} \in \Omega_*$, we have
\begin{equation} \label{eq:1}
    f(\vx_k^*) - \frac{1}{k}\log B  \leqslant f(\vx_k^*)- \frac{1}{k} \log \pi(\vx_k^*) \leqslant f(\tilde{\vx}) - \frac{1}{k} \log \pi(\tilde{\vx}),
\end{equation}

where the first inequality of eq. (\ref{eq:1}) holds for the bounded density, where $\pi(\vx) \leqslant B, \forall \vx \in \Omega$. The second inequality of eq. (\ref{eq:1}) lies in $\vx_k^* \in \arg \max m_k(\vx)$. Furtherly,  $\lim_{k \rightarrow \infty } f(\vx_k^*) =f^*$ follows from
\begin{equation} \label{eq:2}
    f^* \leqslant \liminf_{k \rightarrow \infty} f(\vx_k^*) 
 \leqslant \limsup_{k \rightarrow \infty} f(\vx_k^*) \leqslant \limsup_{k \rightarrow \infty} \left\{ f(\tilde{\vx}) - \frac{1}{k} \log \pi(\tilde{\vx}) \right \} =f^*,
\end{equation}
where the three inequalities follow from the definition of $f^*$ ($\leqslant f(\vx)$, for $\forall \vx \in \Omega$), the definition of limit inferior and limit superior, and the limit superior of eq. (\ref{eq:1}) respectively.

Next, we prove eq. (\ref{eq:thm32}). In the previous part, we have already proved $\lim_{k \rightarrow \infty} f(\vx_k^*) =f^*,$ i.e., for any $\epsilon >0$, there exists large $K_0>0$, such that for $\forall k > K_0$, it holds that
\begin{equation} \label{eq:3}
    f(\vx_k^*) - f^* < \epsilon.
\end{equation}

Suppose there exists a large $K>K_0$, such that 
$$
\inf_{ \tilde{\vx} \in \Omega^*} ||\vx_K^* - \tilde{\vx}|| > \delta, \quad \text{for } \forall \tilde{\vx} \in \Omega^*.
$$
Then $f(\vx_K^*) - f^* > \epsilon$ given the strong separability condition in Assumption \ref{assumption2}, which is contradictory to eq. (\ref{eq:3}). Thus, we have completed the proof for Theorem \ref{thm:3} using the proof by contradictory technique.
\end{proof}

\begin{theorem} The nascent minima distribution function defined in Definition \ref{def:1} satisfies:
\label{thm:4}
\begin{enumerate}
    \item[(i)] For $\forall k \in \mathbb{R}$, $m_k(\vx)$ is a PDF on $\Omega$; especially when $k=0$, $m^{(0)}(\vx)=\pi(\vx)/ \Pi(\Omega)$, where $\Pi(S)= \int_{S} \pi(\vx)\mathrm{d} \vx$ is the probability measure of $S \subseteq \mathbb{R}^d$.
    \item[(ii)] If $\nabla \pi$ and $\nabla f: \mathbb{R}^d \rightarrow \mathbb{R}^d$ are continuous real functions on each dimension, then 
    $$\nabla \log m_k(x) = -k \nabla f(x) + \frac{\nabla \pi(x)}{\pi(x)}.$$
    \item[(ii)] For every $k \in \mathbb{R}$, it holds that
    \begin{align*}
        \frac{\dd}{\dd k} \log m_k (\vx) = \mathbb{E}_{m_k}(f) - f(\vx),
    \end{align*}
    where $\mathbb{E}_{m_k}(f)=\int_{\Omega}f(\vx)m_k(\vx)\mathrm{d} \vx$.
    \item[(iii)] For any $\vx \in \Omega$, consider the sequence $m_k(\vx)$, where $k > 0$ and is going to infinity:
    \begin{itemize}
        \item [(a)] If $\Omega^*$ has zero probability measure, i.e., $\Pi(\Omega^*) \triangleq \int_{\Omega^*}\pi(\vx)\mathrm{d} \vx=0$, then
    \begin{align*}
       m_{\infty}(\vx)  =  \lim_{k \rightarrow \infty}m_k(\vx) = \left\{ 
        \begin{array}{cc}
            0, & \vx \notin \Omega^*;  \\
            \infty, & \vx \in \Omega^*.
        \end{array}
        \right.
    \end{align*}
    \item[(b)] If $\Omega^*$ has nonzero probability measure, i.e., $\Pi(\Omega^*)>0$, then
    \begin{align*}
        \lim_{k \rightarrow \infty}m_k(\vx) = \left\{ 
        \begin{array}{cc}
            0, & \vx \notin \Omega^*;  \\
            \frac{\pi(\vx)}{\Pi(\Omega^*)}, & \vx \in \Omega^*.
        \end{array}
        \right.
    \end{align*}
    \end{itemize}

\item[{(iv)}] Define $\Omega_k$ as the set of maximizers of $m_k(\vx)$ on $\vx \in \Omega$, if $\pi(\vx)$ is bounded, then for any $k > 0$,  the sequence of $\vx_k^* \in \Omega_k$ satisfies the following:
    \begin{enumerate}
        \item $\lim_{k \rightarrow \infty } f(\vx_k^*) =0 (=f^*)$.
        \item $\pi(\vx_k^*) \geqslant \pi^* \stackrel{\Delta}{=} \max_{\vx^* \in \Omega^*} \pi(\vx^*)$.
        \item The sequence $\{\pi(\vx_k^*)\}_{k=1,2,\cdots}$ is non-increasing and converges to a limit, where $ \lim_{k \rightarrow \infty} \pi(\vx_k^*) = \liminf_{k \rightarrow \infty} \pi(\vx_k^*) \stackrel{\Delta}{=} \pi_0$.
    \end{enumerate}
\end{enumerate}
\end{theorem}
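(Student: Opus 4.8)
The plan is to treat Theorem~\ref{thm:4} as a catalogue of facts that flow from the explicit form of $m_k$, reusing computations already established in the proofs of Theorems~\ref{thm:1}--\ref{thm:3}. Throughout I write $Z_k=\int_{\Omega}e^{-kf(\vt)}\pi(\vt)\dd\vt$ for the normalizing constant and assume, as justified by Remark~\ref{rmk:1}, that $0<Z_k<\infty$ (for negative $k$ this requires the integral to converge, so the density claim in (i) is understood on that range). For (i), nonnegativity of $m_k$ is immediate since $e^{-kf}>0$, $\pi>0$ and $Z_k>0$, while $\int_{\Omega}m_k=Z_k/Z_k=1$ by construction; setting $k=0$ collapses the exponential to $1$ and gives $m_0(\vx)=\pi(\vx)/\Pi(\Omega)$. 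The gradient identity follows by writing $\log m_k(\vx)=-kf(\vx)+\log\pi(\vx)-\log Z_k$ and differentiating in $\vx$: since $\log Z_k$ does not depend on $\vx$, its gradient vanishes and we are left with $-k\nabla f(\vx)+\nabla\pi(\vx)/\pi(\vx)$. The $k$-derivative identity is exactly the computation carried out in the proof of Theorem~\ref{thm:2}: differentiating the same log-expression in $k$ and using $\tfrac{\dd}{\dd k}\log Z_k=-\mathbb{E}_{m_k}(f)$ yields $\mathbb{E}_{m_k}(f)-f(\vx)$, with differentiation under the integral justified by the bounds recorded in Remark~\ref{rmk:3}.

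For part (iii) the single decisive step is to identify the limit of $Z_k$. With the normalization $f^*=0$ we have $e^{-kf(\vt)}\to\mathbb{I}(\vt\in\Omega^*)$ pointwise and $0\le e^{-kf(\vt)}\pi(\vt)\le\pi(\vt)$ for $k\ge0$, so dominated convergence (with the integrable majorant $\pi$) gives $\lim_{k\to\infty}Z_k=\Pi(\Omega^*)$. The two cases then split cleanly. For $\vx\in\Omega^*$ we have $f(\vx)=0$, hence $m_k(\vx)=\pi(\vx)/Z_k$; this tends to $\pi(\vx)/\Pi(\Omega^*)$ when $\Pi(\Omega^*)>0$ and diverges to $\infty$ when $\Pi(\Omega^*)=0$. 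For $\vx\notin\Omega^*$ with $c:=f(\vx)>0$ the numerator decays like $e^{-kc}$; when $\Pi(\Omega^*)>0$ the denominator stays bounded away from $0$, so $m_k(\vx)\to0$, and when $\Pi(\Omega^*)=0$ I would bound $Z_k\ge e^{-k\epsilon}\Pi(\Omega_\epsilon)$ for $\Omega_\epsilon=\{\vx\in\Omega:f(\vx)<\epsilon\}$ with $0<\epsilon<c$, giving $m_k(\vx)\le \pi(\vx)\,\Pi(\Omega_\epsilon)^{-1}e^{-k(c-\epsilon)}\to0$. This last bound needs $\Pi(\Omega_\epsilon)>0$, which follows from continuity of $f$ and the full support of $\pi$, since $\Omega_\epsilon$ contains a neighborhood of any point of $\Omega^*$ and therefore carries positive $\pi$-mass.

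For part (iv) I would first record, as in the proof of Theorem~\ref{thm:3}, that the maximizers of $m_k$ coincide with the minimizers of $g_k(\vx):=f(\vx)-\tfrac1k\log\pi(\vx)$. Statement (a) is then precisely eq.~(\ref{eq:thm31}) of Theorem~\ref{thm:3}, which applies because $\pi$ is assumed bounded. For (b), optimality of $\vx_k^*$ tested against any $\tilde\vx\in\Omega^*$ gives $f(\vx_k^*)-\tfrac1k\log\pi(\vx_k^*)\le-\tfrac1k\log\pi(\tilde\vx)$; using $f(\vx_k^*)\ge0$ and multiplying through by $-k<0$ yields $\pi(\vx_k^*)\ge\pi(\tilde\vx)$, and taking the supremum over $\Omega^*$ gives $\pi(\vx_k^*)\ge\pi^*$. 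For the monotonicity in (c), I would take $k_1<k_2$ and add the two optimality inequalities $g_{k_1}(\vx_{k_1}^*)\le g_{k_1}(\vx_{k_2}^*)$ and $g_{k_2}(\vx_{k_2}^*)\le g_{k_2}(\vx_{k_1}^*)$; the $f$-terms cancel and the remainder rearranges to $\big(\tfrac1{k_1}-\tfrac1{k_2}\big)\big(\log\pi(\vx_{k_2}^*)-\log\pi(\vx_{k_1}^*)\big)\le0$, whence $\pi(\vx_{k_2}^*)\le\pi(\vx_{k_1}^*)$. Being non-increasing and bounded below by $\pi^*>0$, the sequence $\{\pi(\vx_k^*)\}$ converges, and its limit equals its infimum, i.e.\ $\pi_0$.

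The main obstacle I anticipate is the $\Pi(\Omega^*)=0$ subcase of (iii): unlike the other parts, which are either direct differentiation or quotable from Theorems~\ref{thm:2}--\ref{thm:3}, here both numerator and denominator of $m_k(\vx)$ vanish, so the conclusion rests on comparing their exponential rates and on the auxiliary fact $\Pi(\Omega_\epsilon)>0$. Care is also needed in (i) to state the valid range of $k$, since the normalizer can diverge for $k<0$ when $f$ is unbounded, and in (iv) to observe that the rearrangement argument is independent of which maximizer $\vx_k^*\in\Omega_k$ is selected, so the stated convergence holds for every admissible choice of the sequence.
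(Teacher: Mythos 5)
Your proposal is correct and takes essentially the same route as the paper's proof: compute $\lim_{k\to\infty} Z_k = \Pi(\Omega^*)$ (you via dominated convergence, the paper via monotone convergence), handle $\vx\notin\Omega^*$ by lower-bounding the denominator over a region where $f$ is strictly below $f(\vx)$ (your sublevel-set bound $Z_k \geqslant e^{-k\epsilon}\Pi(\Omega_\epsilon)$ is a quantitative version of the paper's divergent-ratio argument), and reduce part (iv) to the optimality inequalities for $g_k(\vx)=f(\vx)-\frac{1}{k}\log\pi(\vx)$ exactly as the paper does. The only (harmless) deviations are that your symmetric two-inequality pairing for (iv)(c) is a cleaner variant of the paper's $k\to k+1$ chain of equivalences, and you make explicit the positive-mass fact $\Pi(\Omega_\epsilon)>0$ that the paper leaves implicit.
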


\begin{proof} Clearly, (i) follows from the definition of $m_k(\vx)$ in Definition \ref{def:1}.

\begin{itemize}
\item For every $k \in \mathbb{R}$, (ii) follows from
\begin{align*}
    \frac{\dd}{\dd k} \log m_k(\vx)  &= \frac{\dd}{\dd k} \left( -k f(\vx) + \log \pi(\vx) - \log \left\{\int_{\Omega} e^{-kf(t) } \cdot \pi(t) dt \right\} \right) \\
    & \stackrel{}{=}  -f(\vx) - \frac{  \frac{\dd}{\dd k} \left\{ \int_{\Omega} e^{-kf(t) } \cdot \pi(t) dt   \right\}    }{  \int_{\Omega} e^{-kf(s) } \cdot \pi(s) ds   }  \\ 
    & \stackrel{}{=}  -f(\vx) - \frac{  \int_{\Omega} (-f(t)) \cdot  e^{-kf(t) } \cdot \pi(t) dt    }{  \int_{\Omega} e^{-kf(s) } \cdot \pi(s) ds   }   \\ 
    &= \mathbb{E}_{m_k}(f) - f(\vx), 
\end{align*}

where $\frac{\dd}{\dd k} \left\{ \int_{\Omega} e^{-kf(t) } \cdot \pi(t) dt   \right\} =  \int_{\Omega} \frac{\dd}{\dd k} \left\{  e^{-kf(t) } \cdot \pi(t) \right\}dt$ follows from the Tonelli theorem for the exchangeable order of the integral and derivatives, given that the function $\left\{  e^{-kf(t) } \cdot \pi(t) \right\}$ is non-negative.

\item For (iii) notice that by Remark \ref{rmk:1}, 

\begin{equation*}\label{eq:3.1}
    \int_{\Omega}e^{-kf(\vx)} \pi(\vx) \mathrm{d} \vx = \int_{\Omega^*} \pi(\vx) \mathrm{d} \vx + \int_{\Omega \setminus \Omega^*} e^{-kf(\vx)}\pi(\vx) \mathrm{d} \vx,
\end{equation*}
since $f(\vx)=0$ for $\forall \vx \in \Omega^*$.
In addition, since $\left \{ e^{-kf(\vx)} \pi(\vx) \right\}$ is monotonely decreasing as $k$ increases and $\lim_{k \rightarrow \infty 0} \left\{ e^{-kf(\vx)} \pi(\vx) \right \} =0$ due to $f(\vx)>0$ for any $\vx \notin \Omega^*$, then
\begin{equation*} \label{eq:3.2}
    \lim_{k \rightarrow \infty} \int_{\Omega}e^{-kf(\vx)} \pi(\vx) \mathrm{d} \vx = \int_{\Omega^*} \pi(\vx) \mathrm{d} \vx + \lim_{k \rightarrow \infty} \int_{\Omega \setminus \Omega^*} e^{-kf(\vx)}\pi(\vx) \mathrm{d} \vx  = \Pi(\Omega^*),
\end{equation*}
which follows from the monotone convergence theorem. \begin{itemize}
    \item [(a)] Hence, for any $\vx \in \Omega^*$,
$$
m_{\infty}(\vx)  = \lim_{k \rightarrow \infty}  m_k(\vx) = \lim_{k \rightarrow \infty}  \frac{e^{-kf(\vx)} \cdot \pi(\vx) }{\int_{\Omega} e^{-kf(t) } \cdot \pi(t) dt } =  \left\{ 
\begin{array}{cc}
    \infty, & \text{if } \Pi(\Omega^*)=0; \\
    \frac{\pi(\vx)}{\Pi(\Omega^*)}, & \text{if } \Pi(\Omega^*) \neq 0.
\end{array}
\right.
$$
\item [(b)]  For any $\vx^{\prime} \notin \Omega^*$, it follows from the continuity of $f$ that there exists a set $\Omega_{\vx^{\prime}}$ such that $f(t) < f(\vx^{\prime})$ for any $t \in \Omega_{\vx^{\prime}}$, hence,
\end{itemize}

\begin{align*}
    m_k(\vx^{\prime}) &= \frac{e^{-k f(\vx^{\prime})} \cdot \pi(\vx^{\prime})}{ 
    \int_{\Omega_{\vx^{\prime}} } e^{-kf(t) } \cdot \pi(t) dt  + \int_{\Omega  \setminus \Omega_{\vx^{\prime}} } e^{-kf(t) } \cdot \pi(t) dt  } \\
    & \leqslant \frac{ \pi(\vx^{\prime})}{ \int_{\Omega_{\vx^{\prime}} } (e^{-f(t)}/ e^{-f(\vx^{\prime})} )^k \cdot \pi(t) dt     },
\end{align*}
since $e^{-f(t)}/ e^{-f(\vx^{\prime})} > 1$ for any $t \in \Omega_{\vx^{\prime}}$ and $\pi(t) > 0$, the limit of $\int_{\Omega_{\vx^{\prime}}} (e^{-f(t)}/e^{-f(\vx^{\prime})} )^k \cdot \pi(t) dt$ tends to $\infty$ as $k \rightarrow \infty$; thus, it holds that
\begin{align*}
    \operatorname*{lim}_{k\rightarrow \infty}m_k(\vx^{\prime})=0, \forall \vx^{\prime} \notin \Omega^*.
\end{align*}

\item  (iv) describes the properties of the maximizers of $m_k(\vx)$. For $\forall k > 0$, denote $c_k=\log \left\{ \int_{\Omega} e^{-kf(t)} \pi(t) dt\right\} $, then 
\begin{align*}
    \operatorname*{argmax}_{\vx \in \Omega} m_k (\vx) &= \operatorname*{argmax}_{\vx \in \Omega} \log m_k (\vx) \\
    & = \operatorname*{argmax}_{\vx \in \Omega} \left( -k f(\vx) +  \log \pi(\vx) -c_k \right) \\
    & = \operatorname*{argmin}_{\vx \in \Omega} \left( f(\vx) - \frac{1}{k} \log \pi(\vx) \right)  \\
    &= \operatorname*{argmin}_{\vx \in \Omega} g_k(\vx),
\end{align*}
which indicates the maximizers of $m_k(\vx)$ are the minimizers of $g_k(\vx)$, i.e., $\Omega_k = \{\vx^{\prime}: m_k(\vx^\prime) \geqslant m_k(\vx), \forall \vx \in \Omega\} = \{\vx^{\prime \prime}: g_k(\vx^{\prime \prime}) \leqslant g_k(\vx), \forall \vx \in \Omega\}$. 

\begin{enumerate}
    \item [(a)] For any $\vx_k^* \in \Omega_k$ and $\vx^* \in \Omega^*$, we have
$$
f(\vx_k^*) - \frac{1}{k}\log B  \leqslant f(\vx_k^*)- \frac{1}{k} \log \pi(\vx_k^*) \leqslant f(\vx^*) - \frac{1}{k} \log \pi(\vx^*),
$$
where the first inequality lies in $\pi(\vx) \leqslant B, \forall \vx \in \Omega$ and the second inequality lies in $\vx_k^* \in \arg \max m_k(\vx)$. Furtherly,  $\lim_{k \rightarrow \infty } f(\vx_k^*) =f^*$ follows from
$$
f^* \leqslant \liminf_{k \rightarrow \infty} f(\vx_k^*) 
 \leqslant \limsup_{k \rightarrow \infty} f(\vx_k^*) \leqslant \limsup_{k \rightarrow \infty} \left\{ f(\vx^*) - \frac{1}{k} \log \pi(\vx^*) \right \} =f^*. $$

\item [(b)] For any $\vx^* \in \Omega^* $, and $ \vx_k^* \in \Omega_k,$ we have $\pi(\vx_k^*) \geqslant \pi^* = \max_{\vx \in \Omega^*} \pi(\vx^*)$ for $k>0$, which follows from:
$$
f(\vx_k^*)-\frac{1}{k} \log \pi(\vx_k^*) \leqslant f(\vx^*)-\frac{1}{k}\log \pi(\vx^*) \leqslant f(\vx_k^*)-\frac{1}{k}\log \pi(\vx^*). 
$$

\item [(c)] The monotonicity of the sequence $\{ \pi(\vx_k^*)\}_{k=1, 2, \cdots}$ follows from:
\begin{align*}
f(\vx_{k+1}^*)-\frac{1}{k+1} \log \pi(\vx_{k+1}^*) 
& \leqslant f(\vx_k^*)-\frac{1}{k+1}\log \pi(\vx_k^*) \\
\iff f(\vx_{k+1}^*)-\frac{1}{k+1} \log \pi(\vx_{k+1}^*) & \leqslant f(\vx_{k}^*)-\frac{1}{k} \log \pi(\vx_k^*) + \frac{1}{k(k+1)} \log \pi(\vx_k^*) \\
\iff 
f(\vx_{k+1}^*)-\frac{1}{k+1} \log \pi(\vx_{k+1}^*) & \leqslant f(\vx_{k+1}^*)-\frac{1}{k}\log \pi(\vx_{k+1}^*) + \frac{1}{k(k+1)} \log \pi(\vx_k^*) \\
\iff
\log \pi(\vx_{k+1}^*) &\leqslant \log \pi(\vx_k^*)
\end{align*}
\end{enumerate}
\end{itemize}
\end{proof}

\section{Experimental Settings} \label{sec:supexp}
Consistent with the experimental parameters adopted by ZoRD \citep{shu2022zeroth}, we designate the domains of the Ackley and Levy functions as $[-20, 20]^d$ and $[-7.5, 7.5]^d$, respectively. For ProGO, the parameter for LSS is sample size as $N=200$ and burn-in period $n_b=20$. The configurations are uniformly applied across the RGF, PRGF, GD, and ZoRD algorithms to ensure fair comparisons, where the Adam optimizer \citep{KingBa15} is utilized with a fixed learning rate of 0.1 and exponential decay rates of 0.9 and 0.999. It should be noted that while ProGO is implemented in the R environment, other algorithms are executed in Python. Although this discrepancy may affect runtime comparisons, it does not influence accuracy results.

\end{document}